\newtheorem{theorem}{Theorem}[section]
\newtheorem{lemma}[theorem]{Lemma}
\theoremstyle{remark}
\theoremstyle{definition}
\newtheorem{definition}[theorem]{Definition}
\DeclareMathOperator\Aut{Aut}
\DeclareMathOperator\diag{diag}
\DeclareMathOperator\tr{tr}
\DeclareMathOperator\spa{span}
\begin{document}

\title{Graph immersions with parallel cubic form}

\author{Roland Hildebrand \thanks{%
Univ.\ Grenoble Alpes, CNRS, Grenoble INP, LJK, 38000 Grenoble, France
({\tt roland.hildebrand@univ-grenoble-alpes.fr}).}}

\maketitle

\begin{abstract}
We consider non-degenerate graph immersions into affine space $\mathbb A^{n+1}$ whose cubic form is parallel with respect to the Levi-Civita connection of the affine metric. There exists a correspondence between such graph immersions and pairs $(J,\gamma)$, where $J$ is an $n$-dimensional real Jordan algebra and $\gamma$ is a non-degenerate trace form on $J$. Every graph immersion with parallel cubic form can be extended to an affine complete symmetric space covering the maximal connected component of zero in the set of quasi-regular elements in the algebra $J$. It is an improper affine hypersphere if and only if the corresponding Jordan algebra is nilpotent. In this case it is an affine complete, Euclidean complete graph immersion, with a polynomial as globally defining function. We classify all such hyperspheres up to dimension 5. As a special case we describe a connection between Cayley hypersurfaces and polynomial quotient algebras. Our algebraic approach can be used to study also other classes of hypersurfaces with parallel cubic form.
\end{abstract}

Keywords: affine differential geometry, graph immersions, improper affine hyperspheres, parallel cubic form, Jordan algebras

MSC: 53A15

\section{Introduction}

The cubic form $C$ of an equiaffine hypersurface immersion is the covariant derivative of the affine metric $h$ with respect to the affine connection $\nabla$. Affine hypersurface immersions with parallel cubic form have been studied in various settings for 30 years, and their classification is an important problem in affine differential geometry.

One can consider immersions whose cubic form is parallel with respect to the affine connection, $\nabla C = 0$. Non-degenerate Blaschke immersions satisfying this condition are either quadrics or graph immersions whose defining function is a cubic polynomial \cite{Vrancken88}. Actually, in the latter case the immersion must be an improper affine hypersphere \cite{BNS90}, and the determinant of the Hessian of the defining function identically equals $\pm1$ \cite[Example 3.3, pp.~47--48]{NomizuSasaki}. Non-degenerate Blaschke hypersurface immersions with $\nabla C = 0$ into $\mathbb R^k$, $k = 3,4,5,6$, have been classified in \cite{NomizuPinkall89},\cite{Vrancken88},\cite{Gigena02},\cite{Gigena03}, respectively. In \cite{Gigena11} an algorithm was presented to classify all such immersions for a given arbitrary dimension.

Another class of hypersurface immersions is obtained when the condition $\nabla K = 0$ is imposed, where $K = \nabla - \hat\nabla$ is the difference tensor between the affine connection and the Levi-Civita connection of the affine metric. Non-degenerate Blaschke immersions with this property have been studied in \cite{DillenVrancken98}. There it was established that, as for $\nabla C = 0$, they are either quadrics or improper affine hyperspheres. In the latter case the defining function is given by a polynomial, the affine metric is flat, the difference tensor is nilpotent, i.e., $K_X^m = 0$ for some $m > 1$ and all vector fields $X$, and $[K_X,K_Y] = 0$ for all vector fields $X,Y$. A classification of these improper affine hyperspheres has been obtained for several special cases, namely, if $K^2 = 0$, if the affine metric has Lorentzian signature, and if $K^{n-2} \not= 0$.

Parallelism of the cubic form can also be defined with respect to the connection $\hat\nabla$. Since the affine metric is parallel with respect to $\hat\nabla$, the conditions $\hat\nabla C = 0$ and $\hat\nabla K = 0$ are equivalent. Much work concentrated on the case of Blaschke immersions. A Blaschke immersion satisfying $\hat\nabla C = 0$ must be an affine hypersphere \cite{BNS90}. This hypersphere may be proper or improper.

An $n$-dimensional improper affine hypersphere satisfying $\hat\nabla C = 0$ and which is not a quadric must be affinely equivalent to the {\it Cayley surface} $z = xy-\frac13y^3$ for $n = 2$ \cite{MagidNomizu89}, to one of the graph immersions
\begin{equation} \label{class3}
w = xy+\frac12z^2-\frac13y^3, \qquad w = xy+\frac12z^2-zy^2+\frac14y^4
\end{equation}
for $n = 3$ \cite{HuLi11}, and to one of the graph immersions
\begin{equation} \label{class4}
\begin{aligned}
x_5 &= x_1x_2+x_3x_4-\frac13(x_1^2+x_3^2), \\
x_5 &= x_1x_2+x_3x_4-\frac13x_1x_3(x_1+x_3), \\
x_5 &= x_1x_2+x_3x_4-\frac13x_1x_3^2, \\
x_5 &= x_1x_3-x_1^2x_2+\frac14x_1^4+\frac12(x_2^2-x_4^2), \\
x_5 &= x_1x_2-\frac13x_1^3+\frac12(x_3^2-x_4^2), \\
x_5 &= x_1x_4+x_2x_3-x_1^2x_3-x_1x_2^2+x_1^3x_2-\frac15x_1^5, \\
x_5 &= x_1x_2-\frac13x_1^3+\frac12(x_3^2+x_4^2), \\
x_5 &= x_1x_3+\frac12x_2^2-x_1^2x_2+\frac14x_1^4+\frac12x_4^2
\end{aligned}
\end{equation}
for $n = 4$ \cite{HLLV11b}. For general $n \geq 3$ it cannot be convex \cite{HLV11}, and if it has a Lorentzian metric, then it must be affinely equivalent to one of the graph immersions \cite{HLLV11a}
\[ x_{n+1} = x_1x_2-\frac13x_1^3+\frac12\sum_{k=3}^nx_k^2, \qquad x_{n+1} = x_1x_3+\frac12x_2^2-x_1^2x_2+\frac14x_1^4+\frac12\sum_{k=4}^nx_k^2.
\]
In \cite{CeceLi14} the case of vanishing Pick invariant, constant sectional curvature, and metric with negative index 2 has been classified. In \cite[Sect.\ 6.1.3]{Hildebrand15a} we have sketched a connection between improper affine hyperspheres satisfying $\hat\nabla C = 0$ and nilpotent Jordan algebras.

The second hypersurface in \eqref{class3} and the sixth hypersurface in \eqref{class4} are generalizations of the Cayley surface known as {\it Cayley hypersurfaces}. These hypersurfaces were introduced in \cite{EE06} and shown to be improper hyperspheres with vanishing Pick invariant, satisfying $C \not= 0$ and $\hat\nabla C = 0$ \cite{HLZ11}. Further generalizations of these hypersurfaces have been shown in \cite{LiZhang15} to be characterized by a parallelism condition involving connections which are affine combinations of $\nabla$ and $\hat\nabla$.

\medskip

The main subject of this paper are graph immersions into affine space $\mathbb A^{n+1}$ satisfying the condition $\hat\nabla C = 0$. We establish a correspondence between this class of hypersurfaces and the class of Jordan algebras with non-degenerate trace form, so-called {\it metrised} Jordan algebras \cite{Bordemann97}. We show that such graph immersions can be extended to affine complete symmetric spaces, which are translation-invariant coverings of the quasi-regular domain containing zero in the corresponding Jordan algebra.

The structure of the present paper closely matches that of \cite{Hildebrand15a} on centro-affine immersions with parallel cubic form, but the technical details differ in several respects. The main difference is that here we consider general real Jordan algebras instead of unital ones. The role of the unit element in \cite{Hildebrand15a} is played by the zero element here, and the inverse is replaced by the quasi-inverse. While in \cite{Hildebrand15a} the proper affine hyperspheres satisfying the condition $\hat\nabla C = 0$ correspond to the subclass of semi-simple Jordan algebras, here the improper affine hyperspheres satisfying $\hat\nabla C = 0$ correspond to the nilpotent Jordan algebras.

In particular, the classification of such improper hyperspheres reduces to the classification of nilpotent metrised Jordan algebras. In contrast to the semi-simple Jordan algebras, the nilpotent ones are not fully classified. Partial results are available in \cite{GM75},\cite{ES02a},\cite{ES02b}, where nilpotent Jordan algebras of dimensions $\geq 4$,5, and 6, respectively, have been classified. In \cite{CS99},\cite{ES02a} nilpotent Jordan algebras of dimension $n$ and nilindex $n,n-1$, respectively, have been classified. These classifications are not always up to isomorphism, however. From \cite[Theorem 3.1]{Bordemann97} it follows that the classification of metrised nilpotent Jordan algebras is not simpler than that of nilpotent Jordan algebras. A classification of improper affine hyperspheres satisfying $\hat\nabla C = 0$ is thus out of reach. We will show, however, that such improper affine hyperspheres must be graph immersions with a polynomial as global defining function.

In \cite{DillenVrancken94},\cite{HLV08} it has been observed that the Calabi product of proper affine hyperspheres with parallel cubic form or of such a hypersphere with a point are again proper affine hyperspheres with parallel cubic form, and hence one can speak of decomposable or irreducible such immersions. In a classification, one then only needs to consider the irreducible immersions. We will provide a similar notion of decomposability and irreducibility for graph immersions with parallel cubic form. This notion also respects the property of being an improper affine hypersphere.

The remainder of the paper is structured as follows. In the next section we review the notion of graph immersions, in particular, we define the notion of product and irreducibility for graph immersions in Subsection \ref{subs_prod}. In Section \ref{sec_Jordan} we provide the necessary background on Jordan algebras. Section \ref{sec_correspondence} contains the main technical results of the paper, namely how exactly metrised Jordan algebras are related to graph immersions with parallel cubic form. In Section \ref{sec_classification} we consider specifically improper affine hyperspheres with parallel cubic form and their algebraic counterparts, the nilpotent metrised Jordan algebras. We provide a structural result in Theorem \ref{th_can} and show in Corollary \ref{AHS_sign_dim} that the dimension of an irreducible improper hypersphere with parallel cubic form is bounded by a quadratic function of the number of negative eigenvalues of its affine fundamental form. In Subsection \ref{subs_Cayley} we consider the Cayley hypersurfaces more closely. In Subsection \ref{subs_lowdim} we provide a full classification of irreducible improper affine hyperspheres with parallel cubic form up to dimension 5. In Sections \ref{sec_desc}, \ref{sec_correspondence}, and \ref{sec_classification}, whenever we speak about {\it parallel cubic form}, we will mean the condition $\hat\nabla C = 0$. In Section \ref{sec_other} we apply our methods to other classes of hypersurface immersions with parallel cubic form, in particular, those mentioned at the beginning of this introduction.

\section{Graph immersions} \label{sec_desc}

In this section we review some basic facts about graph immersions and improper affine hyperspheres. We introduce a notion of products of graph immersions which is similar to the Calabi product of proper affine hyperspheres.

\medskip

A graph immersion $f: M_n \to \mathbb A^{n+1}$ of an $n$-dimensional manifold into $(n+1)$-dimensional affine space is an affine hypersurface immersion equipped with a parallel transversal vector field $\xi$. It is non-degenerate if its affine fundamental form $h$ is non-degenerate. The induced connection $\nabla$ of a graph immersion is flat \cite[Example 2.4]{NomizuSasaki}, and we may locally introduce an affine coordinate system on $M_n$. On a simply connected affine chart there exists a scalar function $F$ such that the immersion $f$ is locally affinely equivalent to the graph immersion $x \mapsto (x,F(x))$ with transversal vector field $({\bf 0},1)$ \cite[Prop.~2.8]{NomizuSasaki}. We shall call $F$ a local \emph{representing function} of the immersion $f$. Here and throughout the paper ${\bf 0}$ denotes the zero vector in an $n$-dimensional vector space, which may be a tangent space to a graph immersion or a Jordan algebra.

When considering local properties of the graph immersion, it will be convenient to define the immersion $f$ directly via its representing function. Let $\Omega \subset \mathbb R^n$ be a domain and $F: \Omega \to \mathbb R$ a smooth function. The graph immersion $f: \Omega \to \mathbb R^{n+1} = \mathbb R^n \times \mathbb R$ represented by $F$ is then defined by $f(y) = (y,F(y))$, and the transversal vector field is given by $\xi = ({\bf 0},1)$. The induced connection $\nabla$ equals the canonical flat affine connection on $\mathbb R^n$. The affine fundamental form $h$ is given by the derivative $\nabla^2F = F''$ \cite[p.~39]{NomizuSasaki}. In the sequel we will assume that $h$ is {\it non-degenerate}, in which case it defines a Hessian pseudo-metric on $\Omega$. We shall use this metric to raise and lower indices of tensors on $\Omega$.



From the point of view of affine differential geometry, neither the decomposition $\mathbb R^{n+1} = \mathbb R^n \times \mathbb R$ nor the vector space structure of the target space $\mathbb R^{n+1}$, and hence the representing function $F$, have an invariant meaning. Indeed, the natural target space of an $n$-dimensional affine hypersurface immersion is the affine space $\mathbb A^{n+1}$. The affine connection as well as the affine fundamental form on $\Omega$ are determined solely by the map $f: \Omega \to \mathbb A^{n+1}$ and the transversal vector field $\xi$. Given these objects, a representing function $F$ of the graph immersion can be obtained by specifying an affine hyperplane $A \subset \mathbb A^{n+1}$ which is transversal to $\xi$, and considering $\mathbb A^{n+1}$ as a direct sum $A + \spa\xi$, corresponding to the decomposition $\mathbb R^{n+1} = \mathbb R^n \times \mathbb R$. For $y \in \Omega$ we may then set $F(y) = \alpha$, where $\alpha$ is the unique real number such that $f(y) - \alpha\xi \in A$. The affine connection $\nabla$ is then given by the pullback of the flat affine connection on $A$ to $\Omega$ by virtue of the injective local diffeomorphism $\iota: \Omega \to A$ defined by $\iota: y \mapsto f(y) - F(y)\xi = (f(y) + \spa\xi) \cap A$.

For a given $y \in \Omega$, there exists a unique affine hyperplane $A$ such that $f(y) \in A$ and $A$ is tangent to the image $f[V]$ at $f(y)$, where $V \subset \Omega$ is a small enough neighbourhood of $y$. The corresponding representing function then satisfies $F(y) = 0$, $F'(y) = 0$. Moreover, in this case we can naturally identify $A$ with the tangent space $T_y\Omega$, by putting into correspondence the tangent vector $v \in T_y\Omega$ with the point $f(y)+f_*(v) \in A$, where $f_*$ is the differential of $f$ at $y$. Then the product $T_y\Omega \times \mathbb R$ can be identified with $\mathbb A^{n+1}$ by the relation
\begin{equation} \label{identification}
(v,\alpha) \mapsto f(y)+f_*(v)+\alpha\xi.
\end{equation}
We will make use of this identification later in Subsection \ref{sec4_3}.

\medskip

We adopt the Einstein summation convention over repeating indices. Denote the derivatives of $F$ with respect to the affine connection $\nabla$ by indices after a comma. Thus we have $\nabla_{\alpha}F = F_{,\alpha}$, $\nabla_{\alpha}\nabla_{\beta}F = F_{,\alpha\beta}$ etc. Denote the elements of the inverse of the Hessian $F'' = \nabla^2F$ by $F^{,\alpha\beta}$.

Let $\hat\nabla$ be the Levi-Civita connection of the affine pseudo-metric $h$, $C = \nabla h$ the cubic form, and $K = \nabla - \hat\nabla$ the difference tensor. These affine invariants can be represented by expressions depending on the derivatives of $F$, as follows:
\begin{equation} \label{correspondence}
h_{\alpha\beta} = F_{,\alpha\beta}, \quad C_{\alpha\beta\gamma} = F_{,\alpha\beta\gamma}, \quad K^{\gamma}_{\alpha\beta} = -\frac12F_{,\alpha\beta\delta}F^{,\gamma\delta}.
\end{equation}
The covariant derivative of $C$ with respect to $\hat\nabla$ is given by
\begin{equation} \label{parallel_F3}
\hat\nabla_{\delta}C_{\alpha\beta\gamma} = F_{,\alpha\beta\gamma\delta} - \frac12F^{,\rho\sigma}(F_{,\alpha\beta\rho}F_{,\gamma\sigma\delta} + F_{,\alpha\gamma\rho}F_{,\beta\sigma\delta} + F_{,\beta\gamma\rho}F_{,\alpha\sigma\delta}).
\end{equation}

{\corollary \label{surface_chars} Let $f: \Omega \to \mathbb A^{n+1}$ be a smooth non-degenerate graph immersion with representing function $F: \Omega \to \mathbb R$. Then the immersion $f$ has parallel cubic form with respect to the Levi-Civita connection of the affine metric if and only if
\begin{equation} \label{quasi_lin_PDE}
F_{,\alpha\beta\gamma\delta} = \frac12F^{,\rho\sigma}(F_{,\alpha\beta\rho}F_{,\gamma\delta\sigma} + F_{,\alpha\gamma\rho}F_{,\beta\delta\sigma} + F_{,\alpha\delta\rho}F_{,\beta\gamma\sigma}).
\end{equation} }

\begin{proof}
The corollary is a direct consequence of \eqref{parallel_F3}.
\end{proof}

We have the following characterization of improper affine hyperspheres.

{\lemma \label{IAHS} \cite[Example 3.3, pp.~47--48]{NomizuSasaki} Let $f: \Omega \to \mathbb A^{n+1}$ be a smooth non-degenerate graph immersion with representing function $F: \Omega \to \mathbb R$. Then $f$ is an improper affine hypersphere with affine normal equal to $\xi$ if and only if $\det F'' = \pm1$. }

\subsection{Products of graph immersions} \label{subs_prod}

In this subsection we consider a notion of product for graph immersions and improper affine hyperspheres.

\begin{definition} \label{def_prod}
Let $\Omega_k \subset \mathbb R^{n_k}$, $k = 1,\dots,r$, be domains, $F_k: \Omega_k \to \mathbb R$ smooth functions, and $f_k: \Omega_k \to \mathbb R^{n_k+1}$ the graph immersions defined by $F_k$. Set $\Omega = \Omega_1 \times \dots \times \Omega_r$ and define the function $F: \Omega \to \mathbb R$ by $F(x_1,\dots,x_r) = F_1(x_1) + \dots + F_r(x_r)$. We shall call the graph immersion $f: \Omega \to \mathbb R^{n_1+\dots+n_r+1}$ defined by the function $F$ the {\sl product} of the immersions $f_1,\dots,f_r$. A graph immersion which can be in a nontrivial way (locally) represented as such a product will be called {\sl (locally) decomposable}, otherwise it will be called {\sl irreducible}.
\end{definition}

\begin{lemma} \label{lem_prod}
Assume the notations of Definition \ref{def_prod}. If the graph immersions $f_1,\dots,f_r$ are non-degenerate, then $f$ is non-degenerate. If $f_1,\dots,f_r$ are improper affine hyperspheres, then $f$ is an improper affine hypersphere. If $f_1,\dots,f_r$ have parallel cubic form, then $f$ has parallel cubic form.
\end{lemma}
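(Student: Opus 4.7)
The plan is to exploit the block structure inherited from the definition $F(x_1,\dots,x_r)=F_1(x_1)+\cdots+F_r(x_r)$. Partition the coordinate indices of $\Omega = \Omega_1\times\cdots\times\Omega_r$ into blocks corresponding to the factors. Because $F$ is a sum of functions depending on disjoint sets of variables, any partial derivative $F_{,\alpha_1\dots\alpha_m}$ of order $m\geq 2$ vanishes unless all indices $\alpha_1,\dots,\alpha_m$ lie in a single block, in which case it coincides with the corresponding derivative of $F_k$. In particular, the Hessian $F''$ is block diagonal with blocks $F_1'',\dots,F_r''$.

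For non-degeneracy, $\det F'' = \prod_{k=1}^r \det F_k''$, so if each factor is non-degenerate then so is $F$. For the improper affine hypersphere property, Lemma \ref{IAHS} gives $\det F_k''=\pm 1$ for each $k$, hence $\det F''=\pm 1$ as well, and $f$ is an improper affine hypersphere with affine normal $(0,1)$.

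For the parallel cubic form property, I use Corollary \ref{surface_chars}, which characterizes it by the PDE \eqref{quasi_lin_PDE}. The inverse $F^{,\rho\sigma}$ is also block diagonal, and the third derivatives $F_{,\alpha\beta\gamma}$ vanish off diagonal in the sense described above. Fix indices $\alpha,\beta,\gamma,\delta$. If they all lie in the same block $k$, equation \eqref{quasi_lin_PDE} for $F$ reduces exactly to \eqref{quasi_lin_PDE} for $F_k$, which holds by hypothesis. If they do not all lie in the same block, the left-hand side $F_{,\alpha\beta\gamma\delta}$ is zero. For the right-hand side, consider a typical term $F^{,\rho\sigma}F_{,\alpha\beta\rho}F_{,\gamma\delta\sigma}$: for it to be nonzero, $\{\alpha,\beta,\rho\}$ must lie in one block, $\{\gamma,\delta,\sigma\}$ in one block, and $\{\rho,\sigma\}$ in one block, which forces all six indices to coincide in a single block and in particular $\alpha,\beta,\gamma,\delta$ as well. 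The same argument applies to the two other terms, so the right-hand side also vanishes, and the equation holds trivially.

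The proof is thus essentially a bookkeeping exercise; there is no genuine obstacle, the only point to verify carefully being that every summand on the right-hand side of \eqref{quasi_lin_PDE} forces all four exterior indices into a single block via the chain of shared indices $\rho,\sigma$, so that the off-block cases are automatic.
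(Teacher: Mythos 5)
Your proof is correct and follows essentially the same route as the paper: exploit the block-diagonal structure of all derivatives of $F$ of order $\geq 2$, deduce $\det F'' = \prod_k \det F_k''$ for the first two assertions, and verify the PDE \eqref{quasi_lin_PDE} blockwise for the third. The paper states the third assertion more tersely, simply citing the block-diagonal structure of $F^{,\rho\sigma}, F_{,\alpha\beta\gamma}, F_{,\alpha\beta\gamma\delta}$; your explicit index-chasing via the shared indices $\rho,\sigma$ fills in exactly the detail the paper leaves implicit.
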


\begin{proof}
The mixed derivatives $\frac{\partial^2F}{\partial x_k\partial x_l}$, $k \not= l$, are identically zero, and hence all derivatives of $F$ will be block-diagonal, with the corresponding derivatives of $F_k$ in the diagonal blocks. In particular, we have $\det F'' = \prod_{k=1}^r \det F_k''$. Hence $\det F_k'' \not= 0$ implies $\det F'' \not= 0$, which yields the first assertion of the lemma. The conditions $\det F_k'' = \pm1$ imply $\det F'' = \pm1$, which by Lemma \ref{IAHS} yields the second assertion of the lemma. The third assertion follows from Corollary \ref{surface_chars} and the block-diagonal structure of the tensors $F^{,\rho\sigma},F_{,\alpha\beta\gamma},F_{,\alpha\beta\gamma\delta}$.
\end{proof}

\section{Jordan algebras} \label{sec_Jordan}

In this section we provide the necessary background on Jordan algebras. Much of the material in this section is taken from \cite[Section 14]{ZSSS82}. Other references on Jordan algebras are \cite{Jacobson68},\cite{Koecher99}, or \cite{McCrimmon}.

{\definition \label{Jordan_def} \cite[p.~3]{McCrimmon} A real {\sl Jordan algebra} $J$ is a real vector space endowed with a bilinear operation $\bullet: J \times J \to J$ satisfying the following conditions:

i) commutativity: $x \bullet y = y \bullet x$ for all $x,y \in J$,

ii) Jordan identity: $x \bullet (x^2 \bullet y) = x^2 \bullet (x \bullet y)$ for all $x,y \in J$, where $x^2 = x \bullet x$.
}

Throughout the paper we assume that $J$ is finite-dimensional. Define $x^{k+1} = x \bullet x^k$ recursively for all $x \in J$ and $k \geq 1$.

\medskip

Let us denote the operator of multiplication with the element $x$ by $L_x$, $L_xy = x \bullet y = L_yx$. Then the Jordan identity can be written as $[L_x,L_{x^2}] = 0$. Jordan algebras are power-associative, i.e., $x^k \bullet x^l = x^{k+l}$ for all $k,l \geq 1$. Moreover, we have the following result.

{\lemma \cite[p.~35]{Jacobson68} \label{strongly_ass} In a Jordan algebra we have $[L_{x^k},L_{x^l}] = 0$ for all $k,l \geq 1$, and all operators $L_{x^k}$ are generated by the two operators $L_x,L_{x^2}$. }

{\definition \cite[p.~24]{Schaefer} \label{def_trace_form} A {\sl trace form} on a Jordan algebra $J$ is a symmetric bilinear form $\gamma$ such that
\begin{equation} \label{3symmetry}
\gamma(u \bullet v,w) = \gamma(u,v \bullet w)
\end{equation}
for all $u,v,w \in J$. Equivalently, the operator $L_v$ is self-adjoint with respect to $\gamma$ for all $v \in J$. }

{\definition \cite{Bordemann97} A pair $(J,\gamma)$, where $J$ is a Jordan algebra and $\gamma$ a non-degenerate trace form on $J$, is called {\sl metrised}. }


{\definition \cite[p.~58]{Koecher99} An element $u$ of a Jordan algebra $J$ is called {\sl nilpotent} if $u^r = 0$ for some $r \geq 1$. }

{\lemma \cite[Theorem III.5, p.~58]{Koecher99} \label{niltrace} For an element $u$ of a Jordan algebra $J$ the following are equivalent:

\begin{itemize}
\item $u$ is nilpotent,
\item $L_u$ is nilpotent,
\item there exists $m > 0$ such that $\tr L_{u^r} = 0$ for all $r \geq m$.
\end{itemize} }

{\definition \cite[p.~195]{Jacobson68} A Jordan algebra $J$ is called {\sl nilalgebra} if it consists of nilpotent elements. It is called {\sl nilpotent} if there exists $m \geq 1$ such that the product of every $m$ elements is zero. }

Now every Jordan nilalgebra $J$ is solvable \cite[Theorem 3, p.~196]{Jacobson68} and every solvable Jordan algebra is nilpotent \cite[Cor.\ 1, p.~195]{Jacobson68}. We thus have the following lemma.

{\lemma \label{nilnil} Let $J$ be a Jordan nilalgebra. Then $J$ is nilpotent. }

{\definition A Jordan algebra $J$ is called {\sl direct sum} of the subalgebras $J_1,\dots,J_r$, $J = \oplus_{k = 1}^r J_k$, if $J$ is the sum of $J_1,\dots,J_r$ as a vector space and $x \bullet y = 0$ for all $x \in J_k$, $y \in J_l$ with $k \not= l$. }

Clearly the summands in a direct sum decomposition are ideals, i.e., $x \bullet y \in J_k$ for all $x \in J_k$, $y \in J$.

{\definition A metrised Jordan algebra $(J,\gamma)$ is called {\sl direct sum} of the metrised subalgebras $(J_1,\gamma_1),\dots,(J_r,\gamma_r)$ if $J = \oplus_{k = 1}^r J_k$, $\gamma_k = \gamma|_{J_k}$ for all $k$, and the ideals $J_k$ are mutually $\gamma$-orthogonal. }


An element $e \in J$ is called {\sl unit element} if $x \bullet e = x$ for all $x \in J$. A Jordan algebra possessing a unit element is called {\sl unital}. In a unital Jordan algebra $J$, $y = x^{-1}$ is called the {\sl inverse} of $x$ if $x \bullet y = e$ and $L_x,L_y$ commute \cite[p.~67]{Koecher99}. If it exists, the inverse is unique \cite[Lemma III.5, p.~66]{Koecher99} and satisfies $(x^{-1})^{-1} = x$ \cite[p.~67]{Koecher99}. In this case we shall call $x$ {\sl invertible}. Introduce the operator $U_x = 2L_x^2 - L_{x^2}$, which is quadratic in the parameter $x$. We have the following characterization of the invertible elements of $J$.

{\theorem \cite[Theorem III.12, p.~67]{Koecher99} \label{th_invertible} Let $J$ be a unital Jordan algebra. An element $x \in J$ is invertible if and only if $\det U_x \not= 0$. In this case the inverse is given by $x^{-1} = U_x^{-1}x$, and
\[ U_{x^{-1}} = U_x^{-1},\qquad L_{x^{-1}} = L_xU^{-1}_x = U^{-1}_xL_x.
\] }

It follows that the set of invertible elements in a unital Jordan algebra is open, dense, and conic, its complement being the zero set of the homogeneous polynomial $\det U_x$.

The derivative of the inverse is given by \cite[eq.~(1), p.~73]{Koecher99}
\begin{equation} \label{inv_der}
D_ux^{-1} = -U_x^{-1}u.
\end{equation}
Here $D_u$ denotes the derivative with respect to $x$ in the direction of $u$.

{\definition \cite[p.~52]{McCrimmon} Let $J$ be a Jordan algebra with multiplication $\bullet$. The {\sl unital hull} $\hat J$ of $J$ is the vector space $\mathbb R \times J$ equipped with a multiplication $\hat\bullet$ defined by
\begin{equation} \label{def_hat_prod}
(\alpha,u) \hat\bullet (\beta,v) = (\alpha\beta,\alpha v + \beta u + u \bullet v).
\end{equation} }

The unital hull $\hat J$ is a unital Jordan algebra, with $\hat e = (1,{\bf 0})$ as its unit element, and the original algebra $J$ is canonically isomorphic to the ideal $\{ 0 \} \times J \subset \hat J$ \cite[p.~149]{McCrimmon}.

{\definition \cite[p.~307]{ZSSS82} Let $J$ be a Jordan algebra. An element $a \in J$ is called {\sl quasi-regular} if there exists $b \in J$ such that in the unital hull $\hat J$ of $J$ the element $(1,-a)$ is invertible and $(1,-a)^{-1} = (1,-b)$. Then $b$ is called {\sl quasi-inverse} and denoted by $a^{(-1)}$. }

{\lemma \cite[Theorem 7, pp.~307--308]{ZSSS82} \label{quasi_char} Let $J$ be a Jordan algebra. An element $a \in J$ is quasi-regular with $b \in J$ its quasi-inverse if and only if $(1,-a) \hat\bullet (1,-b) = \hat e$ and $[L_a,L_b] = 0$. }

Note that the zero element is always quasi-regular, and its quasi-inverse is again zero. Moreover, if $(1,-a)$ has an inverse in $\hat J$, then by \eqref{def_hat_prod} the first component of this inverse must be equal to 1. Thus $a$ is quasi-regular if and only if $(1,-a)$ is invertible in $\hat J$. The set of quasi-regular elements is hence open and dense in $J$.

In the present paper, a central role is played by domains of elements $a \in J$ such that $(1,a)$ is invertible in $\hat J$, i.e., elements whose negative is quasi-regular. We shall denote the connected component of zero in the set of all such elements by ${\cal Y}$. Hence $a \in {\cal Y}$ if and only if $-a$ is quasi-regular and there exists a path linking $-a$ and $0$ in the set of quasi-regular elements. For $a \in J$ such that $-a$ is quasi-regular we have by definition that $(1,a) \hat\bullet (1,-(-a)^{(-1)}) = (1,0)$, which by \eqref{def_hat_prod} yields
\begin{equation} \label{quasi_prod}
a \bullet (-a)^{(-1)} = a - (-a)^{(-1)}.
\end{equation}

{\lemma \cite[p.~310]{ZSSS82} \label{nilY} Let $J$ be a nilpotent Jordan algebra. Then every element of $J$ is quasi-regular. }

\subsection{The set of quasi-regular elements} \label{sec_inv}

In this subsection we shall investigate the symmetry properties of the set of quasi-regular elements of a Jordan algebra, more precisely of the connected component ${\cal Y}$ of the zero element in this set. It will turn out that any graph immersion with parallel cubic form is a covering space, for which the component ${\cal Y}$ serves as a base. The main goal will be to represent ${\cal Y}$ as a homogeneous space and to describe its symmetry group. The second goal will be to compute the derivative of the quasi-inverse.

Let $J$ be a real Jordan algebra, $\hat J$ its unital hull, and $\hat e \in \hat J$ the unit element. Denote by $\hat{\cal Y}$ the connected component of $\hat e$ in the set of invertible elements of $\hat J$. Since $\hat{\cal Y}$ is conic, we have that the connected component ${\cal Y}$ of zero in the set of elements whose negative is quasi-regular is given by ${\cal Y} = \{ a \in J \,|\, (1,a) \in \hat{\cal Y} \}$, i.e., $\{1\} \times {\cal Y} = \hat{\cal Y} \cap (\{1\} \times J)$.

Let $\Pi \subset GL(\hat J)$ be the group generated by the transformations $U_{\hat u}$, where $\hat u \in \hat J$ varies in a small neighbourhood of $\hat e$. Define also the subgroup $\Pi_1 \subset \Pi$ generated by the transformations $U_{(1,u)}$, where $u \in J$ varies in a small neighbourhood of zero. For any $a > 0$, $u \in J$ we have $U_{(a,au)} = a^2U_{(1,u)}$. Hence $\Pi = \Pi_1 \times \mathbb R_{++}$ decomposes into a direct product.

{\theorem \cite[Theorem VI.2, p.~110]{Koecher99} \label{transitive_action} The group $\Pi$ acts transitively on $\hat{\cal Y}$. }

In the context of \cite[Chapter VI]{Koecher99} this theorem was proven for semi-simple Jordan algebras. However, as we have sketched in \cite[Section 3.1]{Hildebrand15a}, the proof is valid for every unital Jordan algebra, in particular, for $\hat J$.

For any $\hat u = (a,u) \in \hat J$ the operator $U_{\hat u}$ multiplies the first component of $\hat J = \mathbb R \times J$ by $a^2$. Therefore $\Pi_1$ is precisely the subgroup of $\Pi$ which leaves the set $\{1\} \times {\cal Y} \subset \hat{\cal Y}$ invariant.

Let us find the generators of $\Pi_1$. Set $\hat u^0 = \hat e$ for every $\hat u \in \hat J$. Then we can define the {\sl exponential} $\exp(\hat u) = \sum_{k=0}^{\infty} \frac{1}{k!} \hat u^k = \exp(L_{\hat u})\hat e$, which bijectively maps a neighbourhood of zero in $\hat J$ to a neighbourhood of $\hat e$ \cite[pp.~82--83]{Koecher99}. Note that for $\hat u = (a,u)$, the first component of $\exp(\hat u)$ is given by $e^a$.

{\lemma \cite[Lemma IV.4, p.~83]{Koecher99} For every $\hat w \in \hat J$ we have $U_{\exp(\hat w)} = \exp(2L_{\hat w})$. }

Therefore the group $\Pi$ is generated by the 1-dimensional subgroups $\exp(tL_{\hat w})$, $\hat w \in \hat J$, and the group $\Pi_1$ is generated by the 1-dimensional subgroups $\exp(tL_{(0,w)})$, $w \in J$. The latter generate a flow $\varphi_w: J \times \mathbb R \to J$ with tangent vector field
\begin{equation} \label{flow_gen}
X_w(x) = w + w \bullet x = w + L_wx.
\end{equation}
Note that this vector field is affine-linear, and hence $\Pi_1$ acts on $J$ by affine-linear transformations.

{\corollary \label{cor:Pi1transitive} The group $\Pi_1$ acts transitively on ${\cal Y}$. }

\begin{proof}
By Theorem \ref{transitive_action} $\Pi_1$ acts transitively on the set $\{1\} \times {\cal Y} \subset \hat{\cal Y}$. This action induces a transitive action also on ${\cal Y} \subset J$.
\end{proof}

%
%

\medskip

We shall now compute the $L$ and $U$ operators in $\hat J$. By \eqref{def_hat_prod} we have for all $x \in J$, $\hat x = (1,x)$
\[ L_{\hat x} = \begin{pmatrix} 1 & 0 \\ x & I + L_x \end{pmatrix},\qquad L_{\hat x}^2 = \begin{pmatrix} 1 & 0 \\ 2x+x^2 & I + 2L_x + L_x^2 \end{pmatrix},
\]
$\hat x^2 = (1,2x+x^2)$, and hence
\[ U_{\hat x} = 2L_{\hat x}^2 - L_{(1,2x+x^2)} = \begin{pmatrix} 1 & 0 \\ 2x+x^2 & I + 2L_x + U_x \end{pmatrix}.
\]
By Theorem \ref{th_invertible} the element $\hat x$ is invertible, i.e., $-x$ is quasi-regular, if and only if the matrix $I + 2L_x + U_x$ is invertible. In this case we have
\begin{equation} \label{Uinvunit}
U_{\hat x}^{-1} = \begin{pmatrix} 1 & 0 \\ -(I + 2L_x + U_x)^{-1}(2x+x^2) & (I + 2L_x + U_x)^{-1} \end{pmatrix}.
\end{equation}

Let $\nabla,\hat D$ be the flat affine connections of $J,\hat J$, respectively. For $u \in J$ and $x \in {\cal Y}$, denote by $y$ the directional derivative $\nabla_u(-x)^{(-1)}$. By \eqref{inv_der} and the definition of the quasi-inverse we have $(0,y) = \hat D_{(0,u)}\hat x^{-1} = U_{\hat x}^{-1}(0,u)$. By \eqref{Uinvunit} we then have
\begin{equation} \label{der_q_inv}
\nabla_u(-x)^{(-1)} = (I + 2L_x + U_x)^{-1}u.
\end{equation}
We shall now consider the particular situation when $u = X_w(x)$ for some $w \in J$. Then $L_{\hat x}\hat w = (0,u)$ by definition of the vector field $X_w$. By Theorem \ref{th_invertible} it follows that $(0,y) = U_{(1,x)}^{-1}L_{\hat x}\hat w = L_{\hat x^{-1}}\hat w$. Now note that $\hat x^{-1} = (1,-(-x)^{(-1)})$, and hence $L_{\hat x^{-1}}\hat w = (0,X_w(-(-x)^{(-1)}))$. We therefore obtain
\begin{equation} \label{der_q_inv_special}
\nabla_{X_w(x)}(-x)^{(-1)} = X_w(-(-x)^{(-1)}) = w - w \bullet (-x)^{(-1)}.
\end{equation}

\section{Immersions and Jordan algebras} \label{sec_correspondence}

We are now in a position to establish a connection between non-degenerate graph immersions with parallel cubic form and real metrised Jordan algebras.

\subsection{Jordan algebras defined by immersions} \label{subs_J_i}

In this subsection we show that a non-degenerate graph immersion $f: \Omega \to \mathbb A^{n+1}$ with parallel cubic form equips the tangent space $T_y\Omega$ with the structure of a metrised Jordan algebra $J$ for every point $y \in \Omega$.

Since we consider only a neighbourhood of the point $y$, it is convenient to identify $\Omega$ with a subset of $\mathbb R^n$ and to assume the existence of a representing function $F: \Omega \to \mathbb R$. By Corollary \ref{surface_chars} $f$ has parallel cubic form if and only if $F$ obeys the quasi-linear fourth-order PDE \eqref{quasi_lin_PDE}.

Let us deduce the integrability condition of this PDE. Differentiating \eqref{quasi_lin_PDE} with respect to the coordinate $x^{\eta}$ and substituting the appearing fourth order derivatives of $F$ by the right-hand side of \eqref{quasi_lin_PDE}, we obtain after simplification
\begin{eqnarray*}
F_{,\alpha\beta\gamma\delta\eta} &=& \frac14F^{,\rho\sigma}F^{,\mu\nu}\left(F_{,\beta\eta\nu}F_{,\alpha\rho\mu}F_{,\gamma\delta\sigma} + F_{,\alpha\eta\mu}F_{,\rho\beta\nu}F_{,\gamma\delta\sigma} + F_{,\gamma\eta\nu}F_{,\alpha\rho\mu}F_{,\beta\delta\sigma} + F_{,\alpha\eta\mu}F_{,\rho\gamma\nu}F_{,\beta\delta\sigma} \right. \\
&& + F_{,\beta\eta\nu}F_{,\gamma\rho\mu}F_{,\alpha\delta\sigma} + F_{,\gamma\eta\mu}F_{,\rho\beta\nu}F_{,\alpha\delta\sigma} + F_{,\beta\eta\nu}F_{,\delta\rho\mu}F_{,\alpha\gamma\sigma} + F_{,\delta\eta\mu}F_{,\rho\beta\nu}F_{,\alpha\gamma\sigma} \\
&& \left. + F_{,\delta\eta\nu}F_{,\alpha\rho\mu}F_{,\beta\gamma\sigma} + F_{,\alpha\eta\mu}F_{,\rho\delta\nu}F_{,\beta\gamma\sigma} + F_{,\delta\eta\nu}F_{,\gamma\rho\mu}F_{,\alpha\beta\sigma} + F_{,\gamma\eta\mu}F_{,\rho\delta\nu}F_{,\alpha\beta\sigma}\right).
\end{eqnarray*}
The right-hand side must be symmetric in all 5 indices. Commuting the indices $\delta,\eta$ and equating the resulting expression with the original one we obtain
\begin{eqnarray*}
\lefteqn{F^{,\rho\sigma}F^{,\mu\nu}\left(F_{,\beta\eta\nu}F_{,\delta\rho\mu}F_{,\alpha\gamma\sigma} + F_{,\alpha\eta\mu}F_{,\rho\delta\nu}F_{,\beta\gamma\sigma} + F_{,\gamma\eta\mu}F_{,\rho\delta\nu}F_{,\alpha\beta\sigma} \right.} \\
&& \left. - F_{,\beta\delta\nu}F_{,\eta\rho\mu}F_{,\alpha\gamma\sigma} - F_{,\alpha\delta\mu}F_{,\rho\eta\nu}F_{,\beta\gamma\sigma} - F_{,\gamma\delta\mu}F_{,\rho\eta\nu}F_{,\alpha\beta\sigma}\right) = 0.
\end{eqnarray*}
Raising the index $\eta$, we get by virtue of \eqref{correspondence} the integrability condition
\[ K_{\alpha\mu}^{\eta}K_{\delta\rho}^{\mu}K_{\beta\gamma}^{\rho} + K_{\beta\mu}^{\eta}K_{\delta\rho}^{\mu}K_{\alpha\gamma}^{\rho} + K_{\gamma\mu}^{\eta}K_{\delta\rho}^{\mu}K_{\alpha\beta}^{\rho} = K_{\alpha\delta}^{\mu}K_{\rho\mu}^{\eta}K_{\beta\gamma}^{\rho} + K_{\beta\delta}^{\mu}K_{\rho\mu}^{\eta}K_{\alpha\gamma}^{\rho} + K_{\gamma\delta}^{\mu}K_{\rho\mu}^{\eta}K_{\alpha\beta}^{\rho}.
\]
This condition is satisfied if and only if
\[ K_{\alpha\mu}^{\eta}K_{\delta\rho}^{\mu}K_{\beta\gamma}^{\rho}u^{\alpha}u^{\beta}u^{\gamma}v^{\delta} = K_{\alpha\delta}^{\mu}K_{\rho\mu}^{\eta}K_{\beta\gamma}^{\rho}u^{\alpha}u^{\beta}u^{\gamma}v^{\delta}
\]
for all tangent vector fields $u,v$ on $\Omega$, which can be written as
\begin{equation} \label{int_cond}
K(K(K(u,u),v),u) = K(K(u,v),K(u,u)).
\end{equation}

{\theorem \label{main1} Let $f: \Omega \to \mathbb A^{n+1}$ be a non-degenerate graph immersion with parallel cubic form. Let $y \in \Omega$ be a point and let $\bullet: T_y\Omega \times T_y\Omega \to T_y\Omega$ be the multiplication $(u,v) \mapsto K(u,v)$ defined by the difference tensor $K = \nabla - \hat\nabla$ at $y$. Let $\gamma$ be the symmetric non-degenerate bilinear form defined on $T_y\Omega$ by the affine fundamental form $h$.

Then the tangent space $T_y\Omega$, equipped with the multiplication $\bullet$, is a real Jordan algebra $J$, and $(J,\gamma)$ is a metrised Jordan algebra. }

\begin{proof}
Assume the conditions of the theorem. The tensor $K_{\alpha\beta}^{\gamma}$ is symmetric in the indices $\alpha,\beta$, hence the multiplication $\bullet$ is commutative. With $u^2 = u \bullet u$ condition \eqref{int_cond} can be rewritten as
\[ u \bullet (u^2 \bullet v) = u^2 \bullet (u \bullet v),
\]
which is the Jordan identity in Definition \ref{Jordan_def}. Thus $T_y\Omega$, equipped with the multiplication $\bullet$, is a Jordan algebra $J$.

Let now $u,v,w \in J$ be arbitrary vectors. We obtain
\[ \gamma(u \bullet v,w) = h[K[u,v],w] = -\frac12 C[u,v,w] = \gamma(u,v \bullet w)
\]
by the total symmetry of the cubic form $C$. Hence the form $\gamma$ satisfies \eqref{3symmetry} and is a trace form. Finally, $\gamma$ is non-degenerate because the immersion $f$ is non-degenerate.
\end{proof}


{\lemma \label{AHS} Assume the conditions of Theorem \ref{main1}. If $f$ is an improper affine hypersphere with affine normal equal to $\xi$, then $J$ is nilpotent and $\det\gamma = \pm1$. }

\begin{proof}
Assume the conditions of the lemma. By Lemma \ref{IAHS} we have $\det F'' \equiv \pm1$. Since $\gamma = F''(y)$, we get $\det\gamma = \pm1$. Further, we have
\[ \nabla_u(\log|\det F''|) = F^{,\alpha\beta}F_{,\alpha\beta\gamma}u^{\gamma} = -2K^{\alpha}_{\alpha\gamma}u^{\gamma} = -2\tr L_u = 0
\]
for every tangent vector $u \in T_y\Omega$, because $\det F''$ is constant. By Lemma \ref{niltrace} the Jordan algebra $J$ is a nilalgebra, and by Lemma \ref{nilnil} it is nilpotent.
\end{proof}

{\lemma \label{decomp_lemma} Assume the notations of Definition \ref{def_prod} and let $(J_k,\gamma_k)$ be the metrised Jordan algebras defined by the graph immersions $f_k$, as in Theorem \ref{main1}. Then the metrised Jordan algebra $(J,\gamma)$ defined by the product $f$ of the graph immersions $f_1,\dots,f_r$ is the direct sum of the metrised Jordan algebras $(J_k,\gamma_k)$. }

\begin{proof}
The assertion of the lemma directly follows from the block-diagonal structure of the derivatives $F'',F'''$ which was established in the proof of Lemma \ref{lem_prod}.
\end{proof}

\subsection{Immersions defined by Jordan algebras} \label{sec_imm_alg}

In this subsection we consider the opposite direction and show that every metrised real Jordan algebra $(J,\gamma)$ defines a vertically translationally invariant involutive distribution $\Delta$ on the product ${\cal D} = {\cal Y} \times \mathbb R$, such that the integral hypersurfaces of $\Delta$ are graph immersions with parallel cubic form. Here $-{\cal Y}$ is the connected component of zero in the set of quasi-regular elements of $J$, as defined in Section \ref{sec_Jordan}.

The distribution $\Delta$ shall be defined by the tangent spaces to the orbits of a group acting on ${\cal D}$. For every $w \in J$, define the affine-linear vector field
\[ \tilde X_w(x,\alpha) = (X_w(x),\gamma(w,x))
\]
on $J \times \mathbb R$, where $X_w(x)$ is defined by \eqref{flow_gen} on $J$. Let $\tilde\varphi_w: (J \times \mathbb R) \times \mathbb R \to J \times \mathbb R$ be the flow defined by this vector field on $J \times \mathbb R$. Let $\Theta$ be the group of affine-linear transformations of $J \times \mathbb R$ generated by the flows $\tilde\varphi_w$, $w \in J$. Let further $\pi: J \times \mathbb R \to J$ be the canonical projection, and $D,\nabla$ the flat affine connections on $J \times \mathbb R,J$, respectively.

By definition the flow $\tilde\varphi_w$ on the product $J \times \mathbb R$ projects by virtue of $\pi$ down to the flow $\varphi_w$ generated by \eqref{flow_gen} on $J$, i.e., $\pi \circ \tilde\varphi_w = \varphi_w \circ (\pi \times Id)$. Therefore we may define a surjective group homomorphism ${\cal H}: \Theta \to \Pi_1$ by restricting the action of any element $g \in \Theta$ to the first component of the product $J \times \mathbb R$. In other words, we have $\pi \circ g = {\cal H}(g) \circ \pi$ for every $g \in \Theta$.

Since ${\cal Y}$ is invariant under the action of $\Pi_1$, and the generators of $\Theta$ are invariant under vertical translations, we have that the product set ${\cal D}$ is invariant under the action of $\Theta$.

{\lemma \label{transitive_action4} Let $\tilde a = (a,\alpha) \in {\cal D}$ and $b \in {\cal Y}$ be arbitrary. Then there exists an element $g \in \Theta$ such that $\pi(g\tilde a) = b$. }

\begin{proof}
By Corollary \ref{cor:Pi1transitive} there exists an element $g' \in \Pi_1$ such that $g'a = b$. It then suffices to choose any $g \in {\cal H}^{-1}[g']$.
\end{proof}

Let us define a 1-form $\zeta$ on ${\cal D}$. At the point $(x,\alpha) \in {\cal D}$ we set
\begin{equation} \label{zeta_def}
\zeta(u,\mu) = \gamma(u,(-x)^{(-1)}) - \mu,
\end{equation}
where $(u,\mu) \in J \times \mathbb R$ is a tangent vector at $(x,\alpha)$. Let the distribution $\Delta$ on ${\cal D}$ be defined by the kernel of $\zeta$, and let $\xi$ be the constant vector field on $J \times \mathbb R$ given by $({\bf 0},1)$.

{\lemma The form $\zeta$ does not vanish on ${\cal D}$, is closed, and $\zeta(\xi) \equiv -1$. The distribution $\Delta$ is $n$-dimensional and involutive. }

\begin{proof}
At $(x,\alpha) \in {\cal D}$ we have $\zeta(\xi) = \gamma({\bf 0},(-x)^{(-1)}) - 1 = -1$. Hence $\zeta$ is nowhere zero and its kernel $\Delta$ is an $n$-dimensional distribution.

At $(x,\alpha) \in {\cal D}$ the derivative of $\zeta$ in the direction of a vector $(v,\nu) \in J \times \mathbb R$ is by virtue of \eqref{der_q_inv} given by
\begin{equation} \label{der_zeta}
(D_{(v,\nu)}\zeta)(u,\mu) = \gamma(u,\nabla_v(-x)^{(-1)}) = \gamma(u,(I + 2L_x + U_x)^{-1}v).
\end{equation}
But the operator $(I + 2L_x + U_x)^{-1}$ is self-adjoint with respect to $\gamma$, because the operators $L_x,L_{x^2}$ are self-adjoint by virtue of Definition \ref{def_trace_form}. Thus \eqref{der_zeta} implies $(D_{(v,\nu)}\zeta)(u,\mu) = (D_{(u,\mu)}\zeta)(v,\nu)$. Hence $D\zeta$ is symmetric in its two arguments, and the exterior derivative of $\zeta$ vanishes.
\end{proof}

{\lemma \label{lem:zeta_invariant} The form $\zeta$ and the vector field $\xi$ are invariant under the action of the group $\Theta$ on ${\cal D}$. }

\begin{proof}
For the Lie derivatives of $\zeta$ with respect to the generators of $\Theta$ we have
\begin{eqnarray*}
({\cal L}_{\tilde X_w}\zeta)(u,\mu) &=& (D_{\tilde X_w}\zeta)(u,\mu) + \zeta(D_{(u,\mu)}\tilde X_w) = \gamma(u,\nabla_{X_w}(-x)^{(-1)}) + \zeta(\nabla_uX_w,\gamma(w,u)) \\
&=& \gamma(u,w - w \bullet (-x)^{(-1)}) + \gamma(w \bullet u,(-x)^{(-1)}) - \gamma(w,u) = 0.
\end{eqnarray*}
Here the third equality comes from \eqref{der_q_inv_special}, and the fourth equality holds because $\gamma$ is a trace form. Thus $\zeta$ is invariant with respect to the action of $\Theta$.

The corresponding Lie derivatives of $\xi$ at $(x,\alpha) \in {\cal D}$ are given by
\[ {\cal L}_{\tilde X_w}\xi = -\frac{\partial \tilde X_w(x,\alpha)}{\partial\alpha} = 0.
\]
Here the first equality follows from the fact that $\xi$ is a constant vector field. Hence the vector fields $\xi$ and $\tilde X_w$ commute on ${\cal D}$, and $\xi$ is left invariant by the action of $\Theta$.
\end{proof}

{\corollary The distribution $\Delta$ consists of the tangent spaces to the orbits of the group $\Theta$. }

\begin{proof}
Let $w \in J$. Then at $(x,\alpha) \in {\cal D}$ we have
\[ \zeta(\tilde X_w) = \gamma(w + w \bullet x,(-x)^{(-1)}) - \gamma(w,x) = \gamma(w,(-x)^{(-1)}) + \gamma(w,(-x)^{(-1)} \bullet x) - \gamma(w,x)  = 0,
\]
where the last equality is due to \eqref{quasi_prod}. Hence the vector fields $\tilde X_w$ generating $\Theta$ are tangent to $\Delta$. It follows that the orbit of $\Theta$ through a given point of ${\cal D}$ is contained in the integral manifold of $\Delta$ through that point.

Let $(x,\alpha) \in {\cal D}$ be arbitrary. By Lemma \ref{transitive_action4} there exists $\beta \in \mathbb R$ such that $({\bf 0},\beta)$ is in the orbit of $(x,\alpha)$. At $({\bf 0},\beta)$ we have $\zeta(u,\mu) = -\mu$, and the kernel of $\zeta$ equals the horizontal subspace. On the other hand, $\tilde X_w({\bf 0},\beta) = (w,0)$, and hence every vector of the horizontal subspace is actually in the tangent space to the orbit of $\Theta$ through $({\bf 0},\beta)$. Therefore the claim of the corollary holds at $({\bf 0},\beta)$. Since the group $\Theta$ preserves the form $\zeta$, this relation holds also at the initial point $(x,\alpha)$.
\end{proof}

{\corollary \label{cor:orbits_manifolds} The orbits of the group $\Theta$ in ${\cal D}$ are given by the maximal integral manifolds of the distribution $\Delta$. }

\begin{proof}
The preceding corollary implies that every integral manifold of $\Delta$ must be contained in a single orbit. The opposite inclusion follows from the fact that $\Theta$ is connected.
\end{proof}

%
%

{\lemma \label{lem:h_Dzeta} Let $M \subset {\cal D}$ be an integral manifold of the distribution $\Delta$, equipped with the constant transversal vector field $\xi$. The affine fundamental form $h$ induced on $M$ is given by the restriction of the derivative $D\zeta$ to $M$. }

\begin{proof}
Let $X,Y$ be vector fields on $M$. By definition we have $D_XY = \nabla_XY + h(X,Y)\xi$, where $\nabla$ denotes the induced connection on $M$. Since $\zeta(\nabla_XY) = \zeta(Y) = 0$, $\zeta(\xi) = -1$, we have $h(X,Y) = -\zeta(D_XY) = (D\zeta)(X,Y)$.
\end{proof}

{\lemma \label{lem:nondegenerate} The kernel of the derivative $D\zeta$ is one-dimensional and generated by $\xi$. In particular, $D\zeta$ is non-degenerate on $\Delta$. }

\begin{proof}
By \eqref{der_zeta} the matrix of $D\zeta$ is given by $\diag(\gamma (I + 2L_x + U_x)^{-1},0)$. Both $\gamma$ and $I + 2L_x + U_x$ are non-degenerate. Hence the kernel of $D\zeta$ is one-dimensional and generated by $\xi$. Since $\Delta$ is transversal to $\xi$, the form $D\zeta$ is non-degenerate on $\Delta$.
\end{proof}

{\lemma \label{lem:homogeneous} Let $M$ be a maximal integral manifold of $\Delta$, considered as a graph immersion equipped with the transversal vector field $\xi$. Then $M$ is a homogeneous space on which the group $\Theta$ of affine-linear transformations acts transitively and isometrically. Its image $\pi[M]$ coincides with the domain ${\cal Y}$, the restriction of $\pi$ to $M$ is a covering map, and the different sheets are mapped to each other by vertical translations. }

\begin{proof}
From Corollary \ref{cor:orbits_manifolds} it follows that $\Theta$ acts transitively on $M$. By Lemma \ref{lem:zeta_invariant} the form $\zeta$ and hence its derivatives are invariant with respect to the action of the group $\Theta$. Then the group $\Theta$ preserves the affine fundamental form on $M$ by virtue of Lemma \ref{lem:h_Dzeta}, and $M$ is a homogeneous space. It also follows that $M$ is affine complete.

The relation $\pi[M] = {\cal Y}$ follows from Lemma \ref{transitive_action4}. Since the distribution $\Delta$ is invariant with respect to translations parallel to $\xi$, every simply connected neighbourhood in ${\cal Y}$ is evenly covered by $\pi|_M$. Thus $\pi|_M$ is a covering map, and the different sheets over evenly covered neighbourhoods are related by vertical translations.
\end{proof}

We shall now compute a potential of $\zeta$ in a cylindrical neighbourhood of the line $\{{\bf 0}\} \times \mathbb R \subset {\cal D}$. Let $||\cdot||$ be a Euclidean norm on $J$ such that the matrix of $\gamma$ is orthogonal with respect to this norm. Let $\Upsilon_k$ be the compact set of all possible products of $k$ elements on the unit sphere with respect to the norm $||\cdot||$, and define $\rho_k = \max_{y \in \Upsilon_k} ||y|| \in [0,\infty)$. By induction on $k$ it is not hard to see that $\rho_k \leq \rho_2^{k-1}$. Define $R = \lim\inf_{k \to \infty} \rho_k^{-1/k}$. We have $R \geq \lim_{k \to \infty} \rho_2^{-\frac{k-1}{k}} = \rho_2^{-1} > 0$, and $R \in (0,\infty]$. Let $B_R \subset J$ be the open ball with radius $R$ around ${\bf 0}$.

{\lemma \label{loc_potential} Assume above definitions. Then the function $F(x) = \sum_{k=2}^{\infty} \frac{(-1)^k}{k} \gamma(x,x^{k-1})$ is analytic on $B_R$ and the function $\Phi(x,\alpha) = F(x) - \alpha$ is analytic on the product $B_R \times \mathbb R$. Moreover, $B_R \subset {\cal Y}$, $B_R \times \mathbb R \subset {\cal D}$, and $\Phi$ is a potential of the 1-form $\zeta$ on $B_R \times \mathbb R$. }

\begin{proof}
Let $x \in B_R$ and $||x|| = r < R$. Then $||x^k|| \leq \rho_k r^k$. Since $\gamma$ is orthogonal, it follows that $|\gamma(x,x^{k-1})| \leq r||x^{k-1}|| \leq \rho_{k-1} r^k$. Hence the series defining the function $F(x)$ is uniformly absolutely convergent on every compact set contained in $B_R$. Therefore $F$ is analytic in $B_R$ and $\Phi$ is analytic in $B_R \times \mathbb R$.

The partial derivative of $x^k$ in the direction $u$ is given by
\begin{eqnarray*} \label{grad_xk}
\nabla_u x^k &=& \nabla_u (L_x^{k-1}x) = \sum_{l=1}^{k-1} L_x^{l-1}L_uL_x^{k-1-l}x + L_x^{k-1}u = \sum_{l=1}^{k-1} L_x^{l-1}L_ux^{k-l} + L_x^{k-1}u \\ &=& \sum_{l=1}^k L_x^{l-1}L_{x^{k-l}}u,
\end{eqnarray*}
where $L_{x^0}$ is by convention the identity matrix. The derivative of $F$ is then given by
\begin{eqnarray} \label{derFu}
\nabla_u F &=& \sum_{k=2}^{\infty} \frac{(-1)^k}{k} \left( \gamma(\nabla_u x,x^{k-1}) + \gamma(x,\nabla_u x^{k-1}) \right) \nonumber\\ &=& \sum_{k=2}^{\infty} \frac{(-1)^k}{k} \left( \gamma(u,x^{k-1}) + \sum_{l=1}^{k-1} \gamma(x,L_x^{l-1}L_{x^{k-1-l}}u) \right) \nonumber\\ &=& \sum_{k=2}^{\infty} \frac{(-1)^k}{k} \left( \gamma(x^{k-1},u) + \sum_{l=1}^{k-1} \gamma(L_{x^{k-1-l}}L_x^{l-1}x,u) \right) = \sum_{k=2}^{\infty} (-1)^k \gamma(x^{k-1},u) \nonumber\\ &=& \sum_{k=1}^{\infty} (-1)^{k+1} \gamma(x^k,u),
\end{eqnarray}
where the fourth equality comes from power-associativity and all sums define analytic functions in $B_R$.

For every $x \in B_R$ we have
\[ (1,x) \hat\bullet (1,\sum_{k=1}^{\infty}(-x)^k) = (1,x+\sum_{k=1}^{\infty}(-x)^k-\sum_{k=1}^{\infty}(-x)^{k+1}) = (1,{\bf 0}) = \hat e.
\]
From Lemmas \ref{strongly_ass} and \ref{quasi_char} it then follows that
\begin{equation} \label{quasi_inv_analytic}
(-x)^{(-1)} = -\sum_{k=1}^{\infty}(-x)^k.
\end{equation}
Thus every element in $B_R$ is quasi-invertible and $B_R \subset {\cal Y}$, $B_R \times \mathbb R \subset {\cal D}$.

By \eqref{quasi_inv_analytic} the form $\zeta$ defined by \eqref{zeta_def} is given by $\zeta(u,\mu) = \sum_{k=1}^{\infty}(-1)^{k+1}\gamma(u,x^k) - \mu$. From \eqref{derFu} it then follows that $\zeta$ has the potential $\Phi(x,\alpha) = F(x) - \alpha$.
\end{proof}

{\lemma \label{lem:analytic_inverse} Assume the notations of Lemma \ref{loc_potential}, and set $\tilde x = -(-x)^{(-1)}$. Then the identity $F(x) + F(\tilde x) + \gamma(x,\tilde x) \equiv 0$ holds on $B_R$. }

\begin{proof}
Since $\gamma$ is a trace form, we have $\gamma(x^k,x^l) = \gamma(x,x^{k+l-1})$ for all $k,l \geq 1$ and hence formally
\begin{align*}
F(\tilde x) &= \sum_{k=2}^{\infty} \frac{(-1)^k}{k} \sum_{l = k}^{\infty} (-1)^l \begin{pmatrix} l-1 \\ k-1 \end{pmatrix} \gamma(x,x^{l-1}) = \sum_{l=2}^{\infty} \frac{(-1)^l}{l} \gamma(x,x^{l-1}) \sum_{k=2}^l (-1)^k\begin{pmatrix} l \\ k \end{pmatrix} \\ &= \sum_{l=2}^{\infty} \frac{(-1)^l(l-1)}{l} \gamma(x,x^{l-1}).
\end{align*}
By virtue of $\sum_{k=0}^l \begin{pmatrix} l \\ k \end{pmatrix} = 2^l$ the power series converge absolutely in $B_{R/2}$. In this smaller ball we then get
\[ F(\tilde x) + F(x) = \sum_{l=2}^{\infty} (-1)^l \gamma(x,x^{l-1}) = -\gamma(x,\tilde x).
\]
By analytic continuation this identity then also holds on $B_R$.
\end{proof}

{\corollary \label{local_rep} Assume the notations of Lemma \ref{loc_potential} and let $M$ be the maximal integral manifold through $({\bf 0},\beta) \in {\cal D}$ of the involutive distribution $\Delta$. Then $M$ contains the graph $M'$ of the function $F(x)+\beta$. }

\begin{proof}
For $x \in B_R$ we have $\Phi(x,F(x)+\beta) = -\beta$, and the graph of $F(x)+\beta$ is a level surface of the function $\Phi$. Hence by Lemma \ref{loc_potential} this graph is an integral manifold of the distribution $\Delta$. Note that $M'$ is connected. Since $F({\bf 0}) = 0$, the point $({\bf 0},\beta) \in M$ is contained in $M'$. Hence $M' \subset M$.
\end{proof}

{\theorem \label{alg_to_imm} Let $M$ be a maximal integral manifold of the distribution $\Delta$, equipped with the transversal vector field $\xi$. Then $M$ is a homogeneous, symmetric, analytic, non-degenerate, affine complete graph immersion with parallel cubic form. The restriction $\pi|_M$ is a covering map of the domain ${\cal Y} \subset J$, and different sheets of the covering are related by a translation parallel to the vector $\xi$. }

\begin{proof}
By virtue of Lemmas \ref{lem:nondegenerate}, \ref{lem:homogeneous} it remains to show that $M$ is symmetric, analytic, and has parallel cubic form.

That $M$ is analytic follows from the analyticity of $\zeta$.

Let $y = ({\bf 0},\beta) \in M$ be an arbitrary point in the preimage of ${\bf 0}$ with respect to the covering $\pi|_M$. We shall construct a symmetry $\varsigma$ of $M$ around $y$.

To the point $(x,\alpha) \in M$ we associate the point $\varsigma(x,\alpha) = (-(-x)^{(-1)},2\beta-\alpha+\gamma(x,(-x)^{(-1)}))$. Direct verification yields that $y$ is a fixed point of $\varsigma$, and that $\varsigma$ is an inversion. By Lemma \ref{lem:analytic_inverse} and Corollary \ref{local_rep} the map $\varsigma$ takes a neighbourhood of $y$ in $M$ to $M$. By analytic continuation it then maps $M$ to $M$. By \eqref{quasi_inv_analytic} the differential of $\varsigma$ at $y$ is an inversion of the tangent space $T_yM$, multiplying every tangent vector by $-1$.

We shall now show that $\varsigma$ is an isometry. Let us introduce locally the coordinate $x \in {\cal Y}$ on $M$. By \eqref{der_zeta} and Lemma \ref{lem:h_Dzeta} the affine fundamental form $h$ at $x$ is given by $h(u,v) = \gamma(u,(I + 2L_x + U_x)^{-1}v)$, where $u,v \in T_xM \cong J$ are tangent vectors at $x$. By \eqref{der_q_inv} the images $\tilde u,\tilde v \in T_{\varsigma(x)}M \cong J$ of $u,v$ under the differential of the map $\varsigma$ are given by $\tilde u = -(I + 2L_x + U_x)^{-1}u$, $\tilde v = -(I + 2L_x + U_x)^{-1}v$. We then have
\begin{eqnarray*}
h(\tilde u,\tilde v) &=& \gamma(\tilde u,(I + 2L_{\varsigma(x)} + U_{\varsigma(x)})^{-1}\tilde v) \\ &=& \gamma((I + 2L_x + U_x)^{-1}u,(I + 2L_{\varsigma(x)} + U_{\varsigma(x)})^{-1}(I + 2L_x + U_x)^{-1}v) \\ &=& \gamma(u,(I + 2L_x + U_x)^{-1}(I + 2L_{\varsigma(x)} + U_{\varsigma(x)})^{-1}(I + 2L_x + U_x)^{-1}v).
\end{eqnarray*}
Here in the third equality we used that $I + 2L_x + U_x$ is self-adjoint with respect to $\gamma$. Now note that $\varsigma$ is an inversion, i.e., $\varsigma^2$ is the identity map. Therefore the differential of $\varsigma$ at $x$ is the inverse of the differential of $\varsigma$ at $\varsigma(x)$, and hence $(I + 2L_x + U_x)^{-1}(I + 2L_{\varsigma(x)} + U_{\varsigma(x)})^{-1} = I$. It follows that
\[ h(\tilde u,\tilde v) = \gamma(u,(I + 2L_x + U_x)^{-1}v) = h(u,v),
\]
which proves our claim.

We have constructed an isometric inversion of $M$ around the point $y$. Since $M$ is a homogeneous space, this inversion generates an isometric inversion around an arbitrary point $y' \in M$ by conjugation with an isometry taking $y$ to $y'$. Thus $M$ is a symmetric space.

Let us show that $M$ has parallel cubic form. Choose again $y = ({\bf 0},\beta) \in M$. By Corollary \ref{local_rep} $M$ can locally around $y$ be represented as the graph of the function $F(x) + \beta$. Note that the powers $x^k$ are homogeneous polynomials of degree $k$ in the entries of $x$. By \eqref{derFu} we then have at $x = {\bf 0}$
\begin{eqnarray} \label{derF0}
F_{,\alpha\beta}u^{\alpha}v^{\beta} &=& \gamma(v,u), \nonumber\\
F_{,\alpha\beta\gamma}u^{\alpha}u^{\beta}v^{\gamma} &=& -2\gamma(u \bullet v,u) = -2\gamma(u^2,v), \nonumber\\
F_{,\alpha\beta\gamma\delta}u^{\alpha}u^{\beta}u^{\gamma}u^{\delta} &=& 6\gamma(u^3,u) = 6\gamma(u^2,u^2).
\end{eqnarray}
The matrix of the Hessian $F''$ is then equal to the matrix of $\gamma$, and the coordinate vector of the 1-form $F_{,\alpha\beta\gamma}u^{\alpha}u^{\beta}$ is given by $-2\gamma u^2$. Hence
\[ \frac32F^{,\rho\sigma}F_{,\alpha\beta\rho}F_{,\gamma\delta\sigma}u^{\alpha}u^{\beta}u^{\gamma}u^{\delta} = \frac32(-2\gamma u^2)^T\gamma^{-1}(-2\gamma u^2) = 6(u^2)^T\gamma u^2 = 6\gamma(u^2,u^2),
\]
and \eqref{quasi_lin_PDE} is satisfied at $x = 0$. By \eqref{parallel_F3} we then have $\hat\nabla C = 0$ at $y$.

However, the action of the symmetry group $\Theta$ preserves the covariant derivative $\hat\nabla C$ of the cubic form on $M$, because $\Theta$ consists of affine-linear transformations of the ambient space $J \times \mathbb R \sim \mathbb A^{n+1}$ and preserves the transversal vector field $\xi$ by Lemma \ref{lem:zeta_invariant}. Since $\Theta$ acts transitively on $M$, we have $\hat\nabla C = 0$ identically on $M$. The manifold $M$ has then parallel cubic form. This completes the proof.
\end{proof}

Let us now consider the case when the Jordan algebra $J$ is nilpotent.

{\theorem \label{S1} Let $(J,\gamma)$ be a real metrised nilpotent Jordan algebra such that $|\det\gamma| = 1$. Assume the notations at the beginning of this subsection, and let $M$ be the maximal integral manifold through $({\bf 0},0) \in {\cal D}$ of the distribution $\Delta$, equipped with the transversal vector field $\xi$. Then $M$ is a graph immersion with the representing function given by the polynomial $F(x) = \sum_{k=2}^{\infty} \frac{(-1)^k}{k} \gamma(x,x^{k-1})$. It is a non-degenerate, affine complete, Euclidean complete, homogeneous, symmetric improper affine hypersphere with parallel cubic form. }

\begin{proof}
First note that the sum defining the function $F$ is finite, since $J$ is nilpotent. Hence $F$ is a polynomial. Moreover, the ball $B_R$ from Lemma \ref{loc_potential} is equal to the whole space $J$, because there exists a $k_0 \in \mathbb N$ such that $\rho_k = 0$ for all $k \geq k_0$. By Lemma \ref{nilY} we also have ${\cal Y} = J$. By Corollary \ref{local_rep} the manifold $M$ contains the graph of the function $F$. Clearly the graph of $F$ is Euclidean complete, and hence $M$ equals this graph.

That $M$ is affine complete, non-degenerate, homogeneous, symmetric, and has parallel cubic form, follows from Theorem \ref{alg_to_imm}.

We shall now show that $M$ is an improper affine hypersphere. Differentiating \eqref{derFu} in the direction $v$, we obtain again by \eqref{grad_xk}
\[ \nabla_v\nabla_u F = \sum_{k=1}^{\infty} (-1)^{k+1} \gamma(\nabla_v x^k,u) = \sum_{k=1}^{\infty} (-1)^{k+1} \sum_{l=1}^k \gamma(L_x^{l-1}L_{x^{k-l}}v,u).
\]
Hence the Hessian of $F$ is given by the matrix product
\[ F'' = \gamma \cdot \left( \sum_{k=1}^{\infty} (-1)^{k+1} \sum_{l=1}^k L_x^{l-1}L_{x^{k-l}} \right) = \gamma \cdot \left( \sum_{k=0}^{\infty} \sum_{l=0}^{\infty} (-1)^{k+l} L_x^l L_{x^k} \right).
\]
Now any polynomial with zero constant term in the matrices $L_{x^k}$, $k \geq 1$, is nilpotent. Hence the matrix in parentheses is the sum of the identity matrix and a nilpotent matrix. Therefore its determinant equals 1, and $\det F'' = \det\gamma$ identically on $J$. The assumption $|\det\gamma| = 1$ together with Lemma \ref{IAHS} conclude the proof.
\end{proof}

Finally, we shall consider direct sums of metrised Jordan algebras.

{\lemma \label{direct_sums} Let $(J_k,\gamma_k)$, $k = 1,\dots,r$, be metrised Jordan algebras, and let $(J,\gamma)$ be their direct sum. Let ${\cal Y}_k \subset J_k$, ${\cal Y} \subset J$ be defined as in Subsection \ref{sec_inv}, and let ${\cal D}_k = {\cal Y}_k \times \mathbb R$, ${\cal D} = {\cal Y} \times \mathbb R$. Let $\Delta_k,\Delta$ be the involutive distributions defined on ${\cal D}_k,{\cal D}$, respectively, by the kernels of the corresponding forms \eqref{zeta_def}. Let $M_k,M$ be the maximal integral manifolds through $({\bf 0},0)$ of $\Delta_k,\Delta$, respectively. Then $M$ is locally around $({\bf 0},0)$ a product of $M_k$ as defined in Definition \ref{def_prod}. If the affine hypersurface immersions $M_k$ are improper affine hyperspheres, then $M$ is globally the product of the $M_k$. }

\begin{proof}
The powers of an element $x = (x_1,\dots,x_r) \in J$ are given by $x^k = (x_1^k,\dots,x_r^k)$. We thus have $\gamma(x,x^{l-1}) = \sum_{k=1}^r \gamma_k(x_k,x_k^{l-1})$. By Corollary \ref{local_rep} the hypersurfaces $M_k$ can locally around $({\bf 0},0)$ be represented as graphs of the functions $F_k(x_k) = \sum_{l=2}^{\infty} \frac{(-1)^l}{l}\gamma_k(x_k,x_k^{l-1})$. The product of these graphs as defined in Definition \ref{def_prod} is the graph of the function
\[ F(x) = \sum_{k=1}^r F_k(x^k) = \sum_{k=1}^r \sum_{l=2}^{\infty} \frac{(-1)^l}{l}\gamma_k(x_k,x_k^{l-1}) = \sum_{l=2}^{\infty} \frac{(-1)^l}{l}\gamma(x,x^{l-1}).
\]
Again by Corollary \ref{local_rep} this graph then coincides with $M$ in a neighbourhood of $({\bf 0},0)$.

Finally, if the $M_k$ are improper affine hyperspheres, then by Theorem \ref{S1} $M_k$ is globally the graph of the polynomial $F_k$ and $J_k$ is nilpotent. Therefore $J$ is also nilpotent and $M$ is globally the graph of the polynomial $F(x) = \sum_{k=1}^r F_k(x^k)$.
\end{proof}

\subsection{Immersions and Jordan algebras} \label{sec4_3}

Let us summarize the results of the two preceding subsections.

Let $f: \Omega \to \mathbb A^{n+1}$ be a non-degenerate graph immersion with parallel cubic form. Let $K = \nabla - \hat\nabla$ be the difference tensor between the induced affine connection $\nabla$ on $\Omega$ and the Levi-Civita connection $\hat\nabla$ of the Hessian pseudo-metric defined by the affine fundamental form $h$. Choose a point $y \in \Omega$ and consider the tensor $K$ at $y$. This tensor defines a real Jordan algebra $J_y$ on $T_y\Omega$. Let further $\gamma_y$ be the symmetric bilinear form defined on $T_y\Omega$ by $h$. Then $\gamma_y$ is a non-degenerate trace form on $J_y$ and the pair $(J_y,\gamma_y)$ is a metrised Jordan algebra.

On the other hand, let $J$ be a real Jordan algebra of dimension $n$, $\gamma$ a non-degenerate trace form on $J$, ${\cal Y} \subset J$ the connected component of ${\bf 0}$ in the set of the negatives of the quasi-invertible elements of $J$, and $\Delta$ the involutive distribution on ${\cal D} = {\cal Y} \times \mathbb R$ defined by the kernel of the form \eqref{zeta_def}. Then every maximal integral manifold of $\Delta$ is a non-degenerate graph immersion with parallel cubic form.

We shall now consider the interplay between these relations.

{\lemma \label{imm_alg_imm} Assume above notations and suppose that $\Omega$ is connected. Set $J = J_y$, $\gamma = \gamma_y$. Under the identification of $T_y\Omega \times \mathbb R = J \times \mathbb R$ with $\mathbb A^{n+1}$ given by \eqref{identification} the hypersurface $M = f[\Omega]$ is an integral manifold of $\Delta$. }

\begin{proof}
By virtue of \eqref{identification} we shall assume that the target space of the immersion $f$ is the product $J \times \mathbb R$. Then $M$ is locally around $f(y)$ given by the graph of a function $\tilde F$, and the transversal vector field $\xi$ is given by the constant vector $({\bf 0},1)$. By construction $M$ contains the point $f(y) = ({\bf 0},0)$, and the hyperplane $J \times \{0\}$ is tangent to $M$ at this point. Therefore $\tilde F({\bf 0}) = 0$, $\tilde F'({\bf 0}) = 0$.

Let $M'$ be the maximal integral manifold of $\Delta$ passing through the point $({\bf 0},0)$. By Corollary \ref{local_rep} the hypersurface $M'$ is given by the graph of the function $F(x) = \sum_{k=2}^{\infty} \frac{(-1)^k}{k}\gamma(x,x^{k-1})$ in a neighbourhood of $({\bf 0},0)$. Note that $M'$ has parallel cubic form by Theorem \ref{alg_to_imm}. We have $F({\bf 0}) = 0$, $F'({\bf 0}) = 0$, and the higher derivatives of $F$ at ${\bf 0}$ are given by \eqref{derF0}.

On the other hand, we have $\tilde F''({\bf 0}) = \gamma_y$ by definition of $\gamma_y$, and
\[ \tilde F_{,\alpha\beta\gamma}u^{\alpha}u^{\beta}u^{\gamma} = -2K^{\delta}_{\alpha\beta}\tilde F_{,\gamma\delta}u^{\alpha}u^{\beta}u^{\gamma} = -2\gamma_y(u^2,u)
\]
for all $u \in J_y$. Here the first equality comes form \eqref{correspondence}, and the second one comes from the relation $\tilde F''({\bf 0}) = \gamma_y$.

Thus at ${\bf 0}$ the values of $F$ and its first three derivatives coincide with the values of $\tilde F$ and its corresponding derivatives, respectively. It follows that the hypersurfaces $M,M'$ make a contact of order 3 at ${\bf 0}$. But both hypersurfaces are graph immersions with parallel cubic form and must hence locally coincide. Since $M'$ is affine complete by Theorem \ref{alg_to_imm}, $M$ is actually contained in $M'$. The claim of the lemma now easily follows.
\end{proof}

{\theorem \label{th_main3} Let $f: \Omega \to \mathbb A^{n+1}$ be a connected non-degenerate graph immersion with parallel cubic form and with constant transversal vector field $\xi$. Then $f[\Omega]$ can be extended to an embedded hypersurface $M \subset \mathbb A^{n+1}$ such that if $M$ is equipped with the transversal vector field $\xi$, it is a graph immersion with parallel cubic form satisfying the properties listed in Theorem \ref{alg_to_imm}. } 

\begin{proof}
The theorem is a consequence of Lemma \ref{imm_alg_imm} and Theorem \ref{alg_to_imm}.
%
\end{proof}

Theorem \ref{th_main3} completely characterizes graph immersions with parallel cubic form. It reduces their study to the study of real metrised Jordan algebras $(J,\gamma)$. Note that the choice of different points $y \in \Omega$ leads to metrised Jordan algebras $(J_y,\gamma_y)$ which are isomorphic. The isomorphism is given by the affine-linear maps $g \in \Theta$ which take the different points of $f[\Omega]$ to each other. Thus the isomorphism classes of graph immersions with parallel cubic form are in bijective correspondence with the isomorphism classes of metrised Jordan algebras.

{\remark \label{asso_flat} A stronger condition than the Jordan identity $[L_x,L_{x^2}] = 0$ is the associativity of the algebra $J$, which is equivalent to the condition $[L_x,L_y] = 0$ for all $x,y \in J$. By \cite[eq.~(1.7)]{Duistermaat01} this condition is equivalent to the flatness of the affine metric of the corresponding graph immersions with parallel cubic form. }

We have the following result.

{\theorem \label{th_AHS2} Let $f: \Omega \to \mathbb A^{n+1}$ be an improper affine hypersphere with parallel cubic form. Then $f$ can be extended to an affine complete, Euclidean complete, homogeneous, symmetric improper affine hypersphere with parallel cubic form, which can be represented as the graph of the polynomial $F(x) = \sum_{k=2}^{\infty} \frac{(-1)^k}{k} \gamma(x,x^{k-1})$ for some nilpotent metrised Jordan algebra $(J,\gamma)$ with $|\det\gamma| = 1$. This improper affine hypersphere can be represented as a product of lower-dimensional improper affine hyperspheres with parallel cubic form, in the sense of Definition \ref{def_prod}, if and only if $(J,\gamma)$ is the direct sum of lower-dimensional metrised nilpotent Jordan algebras whose trace forms have determinant equal to $\pm1$. }

\begin{proof}
First of all, note that Theorem \ref{th_main3} is applicable. By Lemma \ref{AHS} the Jordan algebra $J$ is nilpotent and $|\det\gamma| = 1$. The claims of the theorem about the extension of $f$ follow from Theorem \ref{S1}. The assertion about decomposability follows from Lemmas \ref{decomp_lemma} and \ref{direct_sums}.
\end{proof}

Thus the study of improper affine hyperspheres with parallel cubic form can be reduced to the study of those real nilpotent metrised Jordan algebras which cannot be decomposed into a direct sum of lower-dimensional such algebras. We shall call such algebras \emph{indecomposable}. An algebra which can be decomposed shall be called \emph{decomposable}.

\section{Improper affine hyperspheres} \label{sec_classification}

In this section we provide some results on the structure of improper affine hyperspheres with parallel cubic form and furnish a classification of irreducible such hyperspheres in dimension up to 5. In Subsection \ref{subs_Cayley} we consider the special case of the Cayley hypersurfaces.

\subsection{Structural results}

In the preceding section we have proven that improper affine hyperspheres with parallel cubic form are in one-to-one correspondence with nilpotent metrised Jordan algebras whose metrising form has determinant $\pm1$. In this subsection we derive and investigate a semi-canonical form of such algebras. This will subsequently allow us a classification up to isomorphism in low dimensions.

For a real commutative algebra $A$ with multiplication $\bullet$, define the {\it central ascending series} as a sequence of ideals $\{{\bf 0}\} = C_0 \subset C_1 \subset \dots \subset C_k \dots$ by $C_{k+1} = \{ x \in A \,|\, x \bullet y \in C_k\ \forall\ y \in A \}$. If $A$ is nilpotent and finite-dimensional, then there exists an index $m$ such that $C_m = A$ \cite[p.~193]{Bordemann97}. Clearly in this case the series is strictly increasing up to the index $m$, and hence $\{{\bf 0}\} = C_0 \subset \dots \subset C_m = A$ is a partial flag of linear subspaces.

{\lemma \label{complete_flag} Let $A$ be a real commutative nilpotent algebra of dimension $n$ and $\{{\bf 0}\} = C_0 \subset \dots \subset C_m = A$ its central ascending series. Let $\{{\bf 0}\} = V_0 \subset \dots \subset V_n = A$  be a completion of the partial flag $\{C_k\}$ to a complete flag. Then the subspaces $V_k$ are ideals of $A$, and $x \bullet y \in V_{k-1}$ for every $x \in V_k$, $1 \leq k \leq n$, $y \in A$. }

\begin{proof}
Let $1 \leq k \leq n$. Then there exists $k'$ such that $C_{k'} \subset V_{k-1}$, $V_k \subset C_{k'+1}$. For every $x \in V_k \subset C_{k'+1}$, $y \in A$ we have by definition $x \bullet y \in C_{k'} \subset V_{k-1}$. Since $V_{k-1} \subset V_k$, it also follows that $V_k$ is an ideal.
\end{proof}

Let $\gamma$ be a non-degenerate symmetric bilinear form defined on a vector space $V$. For any subspace $U \subset V$, let $U^{\perp} = \{ x \in V \,|\, \gamma(x,y) = 0\ \forall\ y \in U \}$ be its orthogonal subspace with respect to $\gamma$. Since $\gamma$ is non-degenerate, we have $\dim U + \dim U^{\perp} = \dim V$. However, $V = U \oplus U^{\perp}$ if and only if the restriction of $\gamma$ to $U$ is non-degenerate. We now derive a canonical form for $\gamma$ if $V$ is equipped with a complete flag of subspaces.

{\lemma \label{can_form} Let $V$ be a real vector space of dimension $n$ equipped with a complete flag $\{{\bf 0}\} = V_0 \subset \dots \subset V_n = V$ of subspaces. Let $\gamma$ be a non-degenerate symmetric bilinear form on $V$. Then there exists an ordered basis $(v_1,\dots,v_n)$ of $V$ and a partition $S$ of the index set $\{1,\dots,n\}$ into subsets of cardinality 1 or 2 with the following properties.

i) $V_k = \spa\{v_1,\dots,v_k\}$ for all $1 \leq k \leq n$,

ii) $\gamma(v_i,v_j) = 0$ for every $i,j$ belonging to different subsets $I,J \in S$

iii) $\gamma(v_i,v_i) = \pm1$ for every $i$ such that $\{i\} \in S$

iv) $\gamma(v_i,v_i) = \gamma(v_j,v_j) = 0$ and $\gamma(v_i,v_j) = 1$ for every $i \not= j$ such that $\{i,j\} \in S$.
}

\begin{proof}
We prove the lemma by induction over the dimension $n$. For $n = 0$ there is nothing to prove.

Let $n > 0$. We shall consider two cases.

1. The restriction of $\gamma$ to $V_1$ is not zero. Then there exists $v_1 \in V_1$ such that $\gamma(v_1,v_1) = \pm1$, $V = V_1 \oplus V_1^{\perp}$, and the restriction $\gamma'$ of the form $\gamma$ to $V_1^{\perp}$ is non-degenerate. Define subspaces $V'_k \subset V_1^{\perp}$, $k = 0,\dots,n-1$, by $V'_k = V_{k+1} \cap V_1^{\perp}$. Since $V_1 \subset V_{k+1}$ for all such $k$, we have $\dim V'_k = \dim V_{k+1} - 1 = k$, and $V_{k+1} = V'_k \oplus V_1$. It follows that $\{{\bf 0}\} = V'_0 \subset \dots \subset V'_{n-1} = V_1^{\perp}$ is a complete flag of subspaces of $V_1^{\perp}$. Let $\{v_1',\dots,v_{n-1}'\}$ be the ordered basis of $V_1^{\perp}$ and $S'$ the partition of $\{1,\dots,n-1\}$ satisfying properties i) --- iv) by the induction hypothesis on $V_1^{\perp}$ and $\gamma'$. Define $v_k = v_{k-1}'$, $k = 2,\dots,n$, let $\tilde S'$ be the partition of $\{2,\dots,n\}$ obtained from $S'$ by adding 1 to each index, and set $S = \{1\} \cup \tilde S'$. Then the basis $\{v_1,\dots,v_n\}$ of $V$ and the partition $S$ of $\{1,\dots,n\}$ are easily seen to satisfy i) --- iv).

2. The restriction of $\gamma$ to $V_1$ is zero. Since $\gamma$ is non-degenerate, there exists an index $l \geq 2$ such that $V_{l-1} \subset V_1^{\perp}$, $V_l \not\subset V_1^{\perp}$. Choose an arbitrary nonzero vector $v_1 \in V_1$. Then the set $A = \{ x \in V_l \,|\, \gamma(v_1,x) = 1 \}$ is a proper affine hyperplane in $V_l$ which is parallel to $V_{l-1}$. Choose an arbitrary vector $\tilde v \in A$ and define $v_l = \tilde v - \frac{\gamma(\tilde v,\tilde v)}{2}v_1 \in A$. We then have $\gamma(v_l,v_l) = \gamma(\tilde v,\tilde v) - \gamma(\tilde v,\tilde v)\gamma(\tilde v,v_1) = 0$, and $\gamma(v_1,v_l) = 1$, because $v_l \in A$. Since the restriction of $\gamma$ to the 2-dimensional subspace $\spa\{v_1,v_l\}$ is non-degenerate, its restriction $\gamma'$ to the subspace $V' = (\spa\{v_1,v_l\})^{\perp} = V_1^{\perp} \cap (\spa\{v_l\})^{\perp}$ is non-degenerate too, and we have $V = V_1 \oplus \spa\{v_l\} \oplus V'$. For $k = 0,\dots,l-2$ define subspaces $V'_k = V_{k+1} \cap V'$, for $k = l-1,\dots,n-2$ define subspaces $V'_k = V_{k+2} \cap V'$. We have $v_1 \in V_k \cap V_1^{\perp}$, and hence $V_k \cap V_1^{\perp} \not\subset (\spa\{v_l\})^{\perp}$ for all $k \geq 1$. It follows that $\dim(V_k \cap V') = \dim(V_k \cap V_1^{\perp}) - 1$ for all $k \geq 1$. But $\dim(V_k \cap V_1^{\perp}) = \dim V_k = k$ for $k < l$ and $\dim(V_k \cap V_1^{\perp}) = \dim V_k - 1 = k-1$ for $k \geq l$. Thus $\dim V'_k = k$ for all $k$, and $\{{\bf 0}\} = V'_0 \subset \dots \subset V'_{n-2} = V'$ is a complete flag of subspaces of $V'$. Let $\{v_1',\dots,v_{n-2}'\}$ be the ordered basis of $V'$ and $S'$ the partition of $\{1,\dots,n-2\}$ satisfying properties i) --- iv) by the induction hypothesis on $V'$ and $\gamma'$. Define $v_k = v_{k-1}'$ for $k = 2,\dots,l-1$, $v_k = v'_{k-2}$ for $k \geq l+1$, let $\tilde S'$ be the partition of $\{2,\dots,l-1,l+1,\dots,n\}$ obtained from $S'$ by adding 1 to each index $i \leq l-2$ and 2 to each index $i \geq l-1$, and set $S = \{1,l\} \cup \tilde S'$. Then the basis $\{v_1,\dots,v_n\}$ of $V$ and the partition $S$ of $\{1,\dots,n\}$ are easily seen to satisfy i) --- iv).
\end{proof}

{\definition \label{can_J} Let $(J,\gamma)$ be a real nilpotent metrised Jordan algebra of dimension $n$. We shall say that $(J,\gamma)$ is in {\sl semi-canonical form} if the following holds:

i) $x \bullet y \in \spa\{e_1,\dots,e_{k-1}\}$ for every $x \in \spa\{e_1,\dots,e_k\}$, $1 \leq k \leq n$, and $y \in J$. Equivalently, the structure coefficients of $J$ satisfy $K^{\delta}_{\alpha\beta} = 0$ whenever $\delta \geq \min(\alpha,\beta)$.

ii) There exists a partition $S$ of $\{1,\dots,n\}$ into subsets of cardinality 1 or 2 such that the matrix of $\gamma$ is block-diagonal with respect to this partition with diagonal blocks equal to $+1,-1$, or $\begin{pmatrix} 0 & 1 \\ 1 & 0 \end{pmatrix}$.

Here $e_1,\dots,e_n$ are the standard basis vectors of $J$. }

The next result concerns the interplay between the semi-canonical form and the property of irreducibility.

{\lemma \label{semi_can_red} Let $(J,\gamma)$ be a real nilpotent metrised Jordan algebra of dimension $n$ in semi-canonical form. If there exists $1 \leq k < n$ such that the subspaces $V = \spa\{e_1,\dots,e_k\}$ and $V' = \spa\{e_{k+1},\dots,e_n\}$ are mutually $\gamma$-orthogonal, then $(J,\gamma)$ is decomposable. }

\begin{proof}
We have to show that both subspaces $V,V'$ are ideals. That $V$ is an ideal follows from property i) in Definition \ref{can_J}. Let $u \in J$, $v \in V$, $w \in V'$ be arbitrary. We then have $\gamma(u \bullet w,v) = \gamma(w,u \bullet v) = 0$. Here the first equality holds by \eqref{3symmetry} and the second one because $u \bullet v \in V$ and $w \in V'$. Hence $u \bullet w$ is $\gamma$-orthogonal to the whole subspace $V$ and must be an element of $V'$. Thus $V'$ is also an ideal. This completes the proof.
\end{proof}

{\theorem \label{th_can} Every real nilpotent metrised Jordan algebra $(J,\gamma)$ with $\det\gamma = \pm1$ can be brought to semi-canonical form in the sense of Definition \ref{can_J} by an appropriate coordinate transformation. In addition, if $m$ is the number of blocks of size 2 in this form, then the partition $S$ can be chosen equal to $\{\{1,n\},\{2,n-1\},\dots,\{m,n+1-m\},\{m+1\},\dots,\{n-m\}\}$, and $\gamma$ may be chosen such that $\gamma(e_k,e_k) \geq \gamma(e_l,e_l)$ for all $m+1 \leq k < l \leq n-m$. }

\begin{proof}
Let $\{{\bf 0}\} = V_0 \subset \dots \subset V_n = J$ be a complete flag of subspaces obtained by completing the central ascending series of $J$. Let $(v_1,\dots,v_n)$ be an ordered basis of $J$ and $S$ a partition of $\{1,\dots,n\}$ satisfying i) --- iv) in Lemma \ref{can_form}. Choose a coordinate system on $J$ by setting $e_k = v_k$ for all $k$. Then by properties ii) --- iv) of Lemma \ref{can_form} the matrix of $\gamma$ is as required in ii) of Definition \ref{can_J}. By property i) of Lemma \ref{can_form} and by Lemma \ref{complete_flag} property i) of Definition \ref{can_J} holds too, and $(J,\gamma)$ is in semi-canonical form.

Let us show that the partition $S$ can be brought to the required form by a sequence of permutations of coordinates. First we show that for $1 \leq i \leq n-1$ we can exchange the basis vectors $e_i,e_{i+1}$ without invalidating property i) of Definition \ref{can_J} if the partition $S$ satisfies one of the following conditions:

1) $S$ contains subsets $\{i\},\{i+1,l\}$ with $i+1 < l$;

2) $S$ contains subsets $\{k,i\},\{i+1\}$ with $k < i$;

3) $S$ contains subsets $\{k,i\},\{i+1,l\}$ with $k < i$, $i+1 < l$;

4) $S$ contains subsets $\{i,k\},\{i+1,l\}$ with $i+1 < k < l$;

5) $S$ contains subsets $\{k,i\},\{l,i+1\}$ with $k < l < i$;

6) $S$ contains the subsets $\{i\},\{i+1\}$.

\noindent It suffices to show that for every $x \in V_{i+1},y \in J$ we have $x \bullet y \in V_{i-1}$. Set $k = i$ in cases 1, 6 and $l = i+1$ in cases 2, 6 above. Then we have $k < l$ and $(\spa\{e_{i+1}\})^{\perp} = \spa\{e_1,\dots,e_{l-1},e_{l+1},\dots,e_n\}$ in all cases. It follows that $\gamma(e_{i+1} \bullet y,e_k) = \gamma(e_k \bullet y,e_{i+1}) = 0$, because $e_k \bullet y \in \spa\{e_1,\dots,e_{k-1}\} \subset (\spa\{e_{i+1}\})^{\perp}$. But then $e_{i+1} \bullet y \in (\spa\{e_k\})^{\perp} = \spa\{e_1,\dots,e_{i-1},e_{i+1},\dots,e_n\}$. On the other hand, $e_{i+1} \bullet y \in V_i$, and hence $e_{i+1} \bullet y \in V_i \cap (\spa\{e_k\})^{\perp} = V_{i-1}$. Now every vector $x \in V_{i+1}$ can be decomposed as $x = \alpha e_{i+1} + x'$ for some $\alpha \in \mathbb R$ and $x' \in V_i$. But $x' \bullet y \in V_{i-1}$, and hence $x \bullet y \in V_{i-1}$, which proves our claim.

Note that permutations of the coordinates do not change the number of subsets in the partition $S$, in particular, the number of subsets of size 2 stays equal to $m$. Let $j \geq 1$ be the smallest integer such that $\{j,n+1-j\} \not\in S$. If $j > m$, then $S$ is already in the required form. Let now $j \leq m$.

Suppose first that $\{j\} \in S$. Let $i \geq j$ be the smallest integer such that $\{i+1\} \not\in S$. Then there exists $l > i+1$ such that $\{i+1,l\} \in S$ and case 1 above holds. Let us exchange the basis vectors $e_i,e_{i+1}$ and update the partition $S$ accordingly. As a result, we will now have $\{i,l\} \in S$. If $i > j$, then also $\{i-1\} \in S$ and we may exchange $e_{i-1},e_i$. Continuing this process, we arrive at a partition $S$ containing the subset $\{j,l\}$.

We may hence assume that $\{j\} \not\in S$, and there exists $i > j$ such that $\{j,i\} \in S$. Suppose that $i < n+1-j$. Setting $k = j$, we are then in one of the cases 2, 3, 5 above. Let us then exchange the basis vectors $e_i,e_{i+1}$ and update $S$ accordingly. Then we will have $\{j,i+1\} \in S$. If $i+1 < n+1-j$, then we exchange $e_{i+1},e_{i+2}$ and continue the process until $\{j,n+1-j\} \in S$.

We have shown that we can find a sequence of coordinate permutations such that the subset $\{j,n+1-j\}$ becomes an element of $S$, while $(J,\gamma)$ remains in semi-canonical form. Increasing $j$ and repeating this process leads eventually to the required partition $S$. By virtue of case 6 above we may finally sort the remaining indices $k = m+1,\dots,n-m$ according to the value of $\gamma(e_k,e_k)$, bringing $\gamma$ to the desired form.
\end{proof}

{\lemma If the partition $S$ in the previous theorem contains a subset of cardinality one, then the coordinate transformation in this theorem can be chosen to be unimodular. }

\begin{proof}
The determinant of $\gamma$ had modulus 1 at the beginning as well as at the end of the procedure. Hence the applied coordinate transformation had determinant $\pm1$. If the determinant happens to be equal to $-1$, we may apply an additional transformation which does not change the form of $\gamma$. If $S$ contains an index set of cardinality $1$, say $\{i\}$, this transformation can be defined by negating the unit basis vector $e_i$.
\end{proof}

It should be stressed that the semi-canonical form in Definition \ref{can_J} is not unique. It depends on the particular completion of the central ascending series, and also on the choice of the vector $\tilde v$ in case 2 in the proof of Lemma \ref{can_form}.

{\corollary \label{convex_IAHS} Let $M$ be an irreducible improper affine hypersphere with parallel cubic form and definite affine metric. Then $M$ is 1-dimensional and must be a parabola. }

\begin{proof}
Let $(J,\gamma)$ be the real metrised Jordan algebra defined by $M$ as in Theorem \ref{main1}. By Lemma \ref{AHS} $J$ is nilpotent, and by Theorem \ref{th_AHS2} $(J,\gamma)$ is indecomposable. By Theorem \ref{th_can} we can assume that $(J,\gamma)$ is in semi-canonical form. But then the matrix of $\gamma$ must be diagonal, because $\gamma$ is a definite quadratic form. By Lemma \ref{semi_can_red} we have $\dim J = 1$. But a nilpotent algebra of dimension 1 must be the algebra with zero structural tensor. The claim of the corollary now follows from Theorem \ref{th_AHS2}.
\end{proof}

{\lemma \label{sign_dim2} Let $(J,\gamma)$ be a real nilpotent metrised Jordan algebra of dimension $n \geq 2$, such that $\gamma$ has a signature with $k$ negative eigenvalues. If in any semi-canonical form of $(J,\gamma)$ the number of subsets of size 2 in the partition $S$ satisfies $m < -\frac32+\sqrt{\frac94+2\max(k,n-k)}$, then $(J,\gamma)$ is decomposable. }

\begin{proof}
Assume the conditions of the lemma. By Theorem \ref{th_can} we can assume that the partition $S$ is given by $\{\{1,n\},\{2,n-1\},\dots,\{m,n+1-m\},\{m+1\},\dots,\{n-m\}\}$. If $m+1 > n-m$, then $m = k = \frac{n}{2}$, and the inequality yields $n^2+2n < 0$, a contradiction. Hence $m+1 \leq n-m$. The restriction of $\gamma$ to the $(n-2m)$-dimensional subspace $V = \spa\{e_{m+1},\dots,e_{n-m}\}$ has signature $(n-k-m,k-m)$. An isotropic subspace $L \subset V$ can therefore have dimension at most $\min(k-m,n-k-m)$. Thus every subspace of $V$ of dimension $d > \min(k-m,n-k-m)$ must contain a vector $v$ such that $\gamma(v,v) \not= 0$.

Let $m+1 \leq \delta \leq n-m$, set $\sigma = \gamma(e_{\delta},e_{\delta}) = \pm1$, and suppose that the structure coefficient $K^{\delta}_{\alpha\beta}$ of $J$ is nonzero. Define $\alpha' = \alpha$ if $m+1\leq\alpha\leq n-m$, and $\alpha' = n+1-\alpha$ otherwise. Then we have $(\spa\{e_{\alpha}\})^{\perp} = \spa\{e_1,\dots,e_{\alpha'-1},e_{\alpha'+1},\dots,e_n\}$. We obtain $K^{\delta}_{\alpha\beta} = \sigma\gamma(e_{\alpha} \bullet e_{\beta},e_{\delta}) = \sigma\gamma(e_{\delta} \bullet e_{\beta},e_{\alpha}) = \sigma K^{\alpha'}_{\beta\delta} \not= 0$. By property i) of Definition \ref{can_J} it follows that $\alpha' < \delta < \alpha$. Hence $\alpha \geq n-m+1$. Similarly, $\beta \geq n-m+1$. Therefore there exist at most $\frac{m(m+1)}{2}$ index pairs $(\alpha,\beta)$ with $\alpha \leq \beta$ such that $K^{\delta}_{\alpha\beta} \not= 0$, and the set of these pairs is the same for all $\delta = m+1,\dots,n-m$. Thus the dimension of the intersection $(J \bullet J)^{\perp} \cap V$ is at least $\dim V - \frac{m(m+1)}{2} = n - \frac{m(m+5)}{2}$.

The inequality in the lemma yields $\frac{m(m+3)}{2} < \max(k,n-k)$, which is equivalent to $n - \frac{m(m+5)}{2} > \min(k-m,n-k-m)$. By the preceding two paragraphs there exists a vector $v \in (J \bullet J)^{\perp} \cap V$ such that $\gamma(v,v) \not= 0$. Since $\gamma(x \bullet y,v) = 0$ for all $x,y \in J$, the subspace $(\spa\{v\})^{\perp}$ is an ideal. By $n \geq 2$ this ideal is not zero. On the other hand, by \eqref{3symmetry} we have $\gamma(x \bullet v,y) = 0$ for all $x,y \in J$, and hence $x \bullet v = 0$ for all $x \in J$. It follows that $\spa\{v\}$ is also an ideal. But $v \not\in (\spa\{v\})^{\perp}$, and $(J,\gamma)$ is a direct sum of two mutually $\gamma$-orthogonal ideals. This proves the lemma.
\end{proof}

{\corollary \label{sign_dim} Let $(J,\gamma)$ be an indecomposable real nilpotent metrised Jordan algebra of dimension $n \geq 2$, such that $\gamma$ has a signature with $k$ negative (or $k$ positive) eigenvalues. Then $n \leq \frac{k(k+5)}{2}$. }

\begin{proof}
By Lemma \ref{sign_dim2} the number $m$ of subsets of size 2 in the partition $S$ corresponding to any semi-canonical form of $(J,\gamma)$ must be at least $-\frac32+\sqrt{\frac94+2(n-k)}$. On the other hand, it cannot exceed $k$. It follows that $k \geq -\frac32+\sqrt{\frac94+2(n-k)}$, which readily yields the required bound on $n$.
\end{proof}

{\corollary \label{AHS_sign_dim} Let $M$ be an irreducible improper affine hypersphere with parallel cubic form and such that its affine metric has a signature with $k$ negative (or $k$ positive) eigenvalues. Then the dimension of $M$ is bounded from above by $n \leq \max(1,\frac{k(k+5)}{2})$. }

\begin{proof}
Apply Corollary \ref{sign_dim} to the indecomposable nilpotent metrised Jordan algebra $(J,\gamma)$ defined by $M$ as in Theorem \ref{main1}.
\end{proof}

\subsection{Cayley hypersurfaces} \label{subs_Cayley}

In this subsection we provide a simple representation of the metrised Jordan algebras corresponding to the Cayley hypersurfaces. The $n$-dimensional Cayley hypersurface is given by the graph of the function \cite{EE06}
\begin{equation} \label{def_Cayley}
F(x_1,\dots,x_n) = \sum_{d=2}^{n+1} \frac{(-1)^d}{d} \sum_{i_1+\dots+i_d = n+1} \prod_{j = 1}^d x_{i_j}.
\end{equation}
Here the sum over the $d$-tuple $(i_1,\dots,i_d)$ counts all permutations of the indices, e.g., for $n = 3$ each of the triples $(1,1,2),(1,2,1),(2,1,1)$ makes a contribution to this sum. The Cayley hypersurfaces possess a commutative subgroup of the automorphism group which acts transitively \cite[Prop.~2]{EE06}, they are improper affine hyperspheres \cite[Prop.~4]{EE06} with parallel cubic form and flat affine metric \cite{HLZ11}. We now give the following description of the metrised Jordan algebras generated by the Cayley hypersurfaces.

{\theorem \label{th_Cayley} Let $M \subset \mathbb R^{n+1}$ be an $n$-dimensional Cayley hypersurface. Then the metrised Jordan algebra $(J,\gamma)$ defined by $M$ as in Theorem \ref{main1} can be described as the polynomial quotient ring $(t\mathbb R[t])/(t^{n+1})$. If $p,q$ are two polynomials in $t\mathbb R[t]$, then $\gamma([p],[q])$ is defined as the coefficient at $t^{n+1}$ of the product $p \cdot q$. Here $[p],[q] \in J$ are the equivalence classes of $p,q$. }

\begin{proof}
We shall explicitly compute the matrix of $\gamma$ and the coefficients $K^{\delta}_{\alpha\beta}$ of the structure tensor of $J$ and compare them to the second and third derivatives at the origin of the function $F$ defined in \eqref{def_Cayley}.

First note that if $p,q \in t\mathbb R[t],r \in \mathbb R[t]$ are polynomials, then $(rt^{n+1}) \cdot q$ is divisible by $t^{n+2}$ and hence the products $p \cdot q,(p+rt^{n+1}) \cdot q$ have the same coefficient at $t^{n+1}$. Thus $\gamma([p],[q])$ is well-defined. For $p = \sum_{k=1}^n p_kt^k$, $q = \sum_{k=1}^n q_kt^k$ we have
\[ p \cdot q = \sum_{k,l=1}^n p_kq_lt^{k+l} = \sum_{m=2}^{2n} \left( \sum_{k=1}^{\min(n,m-1)} p_kq_{m-k} \right) t^m.
\]
In particular, for $m \leq n+1$ the coefficient of $p\cdot q$ at $t^m$ is given by $\sum_{k=1}^{m-1} p_kq_{m-k}$. It follows that
\[ \gamma([p],[q]) = \sum_{k=1}^n p_kq_{n+1-k},\qquad [p] \bullet [q] = \left[ \sum_{m=2}^n \left( \sum_{k=1}^{m-1} p_kq_{m-k} \right) t^m \right].
\]
The elements of $\gamma,K$, and their contraction are hence given by
\[ \gamma_{\alpha\beta} = \left\{ \begin{array}{rl} 0, & \alpha+\beta \not= n+1, \\ 1, & \alpha+\beta = n+1, \end{array} \right. \quad
K^{\delta}_{\alpha\beta} = \left\{ \begin{array}{rl} 0, & \alpha+\beta \not= \delta, \\ 1, & \alpha+\beta = \delta, \end{array} \right. \quad \gamma_{\delta\rho}K^{\rho}_{\alpha\beta} = \left\{ \begin{array}{rl} 0, & \alpha+\beta+\delta \not= n+1, \\ 1, & \alpha+\beta+\delta = n+1. \end{array} \right.
\]
On the other hand, the second and third derivative of $F$ at the origin are given by
\[ F_{\alpha\beta} = \left\{ \begin{array}{rl} 0, & \alpha+\beta \not= n+1, \\ 1, & \alpha+\beta = n+1, \end{array} \right. \qquad\qquad
F_{\alpha\beta\delta} = \left\{ \begin{array}{rl} 0, & \alpha+\beta+\delta \not= n+1, \\ -2, & \alpha+\beta+\delta = n+1. \end{array} \right.
\]
The last relation in \eqref{correspondence} completes the proof.
\end{proof}

{\corollary \label{Cayley_irr} The Cayley hypersurfaces are irreducible. }

\begin{proof}
By Lemmas \ref{decomp_lemma} and \ref{direct_sums} the Cayley hypersurfaces are irreducible if and only if the corresponding metrised Jordan algebras $(J,\gamma)$ are indecomposable. Suppose that $(J,\gamma)$ can be decomposed into two non-trivial factors. Then each of the factors is nilpotent and there exist at least two linearly independent elements $p,p'$ such that $p \bullet q = p' \bullet q = 0$ for all $q \in J$. But in the polynomial quotient ring $(t\mathbb R[t])/(t^{n+1})$ any such element must be proportional to $[t^n]$, a contradiction.
\end{proof}

From the expression in Theorem \ref{th_AHS2} it can be seen that the function $F$, evaluated at the equivalence class $[p]$ of a polynomial $p \in t\mathbb R[t]$, is given by the coefficient at $t^{n+1}$ of the Taylor expansion around $t = 0$ of the function $f(t) = p(t) - \log(1+p(t))$. This function $f = f(t;p_1,p_2\dots)$ is hence the generating function of the functions $F$ defining the Cayley hypersurfaces in all dimensions.

The Jordan algebras corresponding to the Cayley hypersurfaces involve univariate polynomials. Other improper affine hyperspheres with flat affine metric can be constructed using nilpotent metrised associative algebras emanating from multivariate polynomials. Consider, e.g., the polynomial quotient ring $J = (t\mathbb R[t,s])/(\{t^3,s^2\})$, with elements being the equivalence classes of $p(t,s) = p_1ts + p_2t^2 + p_3t + p_4t^2s$. For two polynomials $p,q \in t\mathbb R[t,s]$, define $\gamma([p],[q])$ as the coefficient at $t^3s$ of the product $p \cdot q$. Then $(J,\gamma)$ is a nilpotent metrised associative algebra, and the function from Theorem \ref{th_AHS2} is given by $F = p_1p_2 + p_3p_4 - p_1p_3^2$. It is not hard to check that the improper affine hypersphere defined by the graph of this function is isomorphic to that in line 3 of \eqref{class4}.

\subsection{Low-dimensional improper affine hyperspheres} \label{subs_lowdim}

In this subsection we classify all irreducible improper affine hyperspheres with parallel cubic form up to dimension 5. Since the classification of improper affine hyperspheres with parallel cubic form is known in dimensions 2, 3, 4 \cite{MagidNomizu89},\cite{HuLi11},\cite{HLLV11b}, we need to check only for reducibility in these dimensions.

{\theorem \label{th_class234} Let $M$ be an irreducible improper affine hypersphere with parallel cubic form in dimension $n \leq 4$. If $n = 1$, then $M$ is a quadric. If $n = 2,3$, then $M$ is isomorphic to the $n$-dimensional Cayley hypersurface. If $n = 4$, then $M$ is isomorphic to exactly one of the hypersurfaces in lines 2, 3, 6 of classification \eqref{class4}. }

\begin{proof}
For $n = 1$ the claim follows from Corollary \ref{convex_IAHS}. For $n \geq 2$ $M$ cannot be a quadric, because a quadric is irreducible if and only if its dimension is 1.

The first hypersurface in \eqref{class3} is reducible, because it decomposes as $w = (xy-\frac13y^3)+(\frac12z^2)$. For $n = 2,3$ the claim now follows from Corollary \ref{Cayley_irr}.

The hypersurfaces in lines 1, 4, 5, 7, 8 of \eqref{class4} can be decomposed non-trivially as
\begin{eqnarray*}
x_5 &=& (x_1x_2-\frac13x_1^2)+(x_3x_4-\frac13x_3^2), \\
x_5 &=& (x_1x_3-x_1^2x_2+\frac14x_1^4+\frac12x_2^2)-(\frac12x_4^2), \\
x_5 &=& (x_1x_2-\frac13x_1^3)+(\frac12x_3^2)\mp(\frac12x_4^2), \\
x_5 &=& (x_1x_3+\frac12x_2^2-x_1^2x_2+\frac14x_1^4)+(\frac12x_4^2).
\end{eqnarray*}
Let $(J,\gamma)$ be the real nilpotent metrised Jordan algebra defined by the hypersurface in line 2, 3, or 6 of \eqref{class4} as in Theorem \ref{main1}. After a suitable permutation of the coordinates, $(J,\gamma)$ is in semi-canonical form given by
\[ K^2_{33} = a,\ K^2_{34} = K^2_{43} = K^1_{33} = b,\ K^2_{44} = K^1_{34} = K^1_{43} = c,\ K^1_{44} = d,\ K^1_{24} = K^1_{42} = K^3_{44} = e,
\]
\begin{equation} \label{algebras4}
\gamma = \begin{pmatrix} 0 & 0 & 0 & 1 \\ 0 & 0 & 1 & 0 \\ 0 & 1 & 0 & 0 \\ 1 & 0 & 0 & 0 \end{pmatrix},\quad \begin{pmatrix} a \\ b \\ c \\ d \\ e \end{pmatrix} = \begin{pmatrix} 0 \\ \frac13 \\ \frac13 \\ 0 \\ 0 \end{pmatrix},\begin{pmatrix} 0 \\ 0 \\ \frac13 \\ 0 \\ 0 \end{pmatrix},\ \mbox{or}\ \begin{pmatrix} 0 \\ 1 \\ 0 \\ 0 \\ 1 \end{pmatrix},
\end{equation}
all other elements of the structure tensor $K$ being zero. Let us show that all three algebras are indecomposable. If the 1-dimensional zero algebra is present as a factor, then the operators $L_x$, $x \in J$, must have a common eigenvector $v$ with eigenvalue zero and such that $\gamma(v,v) \not= 0$. Such an eigenvector does not exist in any of the cases. Hence if any of the algebras in \eqref{algebras4} is decomposable, then it can only split in two indecomposable factors of dimension 2. These factors must be isomorphic to the algebra defined by the Cayley surface, because there is no other irreducible improper affine hypersphere with parallel cubic form in dimension 2. In order to show that the product of two such 2-dimensional algebras is not isomorphic to one of the algebras \eqref{algebras4}, we consider the set of vectors $x = (x_1,x_2,x_3,x_4)^T \in J$ where the operator $L_x$ drops rank with respect to its generic rank. In the product the operator $L_x$ drops rank on two distinct hyperplanes, but in the three algebras above it drops rank at the sets $\{ x \in J \,|\, x_3 = x_4 = 0\}$ for the first and $\{ x \in J \,|\, x_4 = 0\}$ for the other two. Hence such a product cannot be isomorphic to one of the algebras \eqref{algebras4}, and they must be indecomposable. But then the hypersurfaces in lines 2, 3, 6 of \eqref{class4} are irreducible by Lemma \ref{decomp_lemma}. This completes the proof.
\end{proof}

Note also that all algebras \eqref{algebras4} are actually associative, and hence the affine pseudo-metric of the corresponding improper affine hyperspheres is flat.

{\corollary \label{Lorentz_IAHS} Let $M$ be an irreducible improper affine hypersphere with parallel cubic form and with Lorentzian affine fundamental form. Then $\dim M = 2$ or $\dim M = 3$ and $M$ is isomorphic to the Cayley hypersurface in the corresponding dimension. }

\begin{proof}
By Corollary \ref{AHS_sign_dim} the dimension of $M$ does not exceed 3. Clearly it must be at least 2. The proof is concluded by application of Theorem \ref{th_class234}, noting that the Cayley surface and its 3-dimensional analog have a Lorentzian signature.
\end{proof}

A general, i.e., not necessarily irreducible, improper affine hypersphere with parallel cubic form and with Lorentzian affine metric must be either a quadric or a direct product of an irreducible such improper affine hypersphere and a convex one. We thus recover the classification of Lorentzian improper affine hyperspheres with parallel cubic form which was derived in \cite{HLLV11a}.

\medskip

In order to classify all irreducible improper affine hyperspheres with parallel cubic form in dimension 5, we shall consider the corresponding nilpotent metrised Jordan algebras.

{\theorem \label{algebra5} An indecomposable real nilpotent metrised Jordan algebra $(J,\gamma)$ of dimension 5, with $\gamma$ having signature $(+++--)$, is isomorphic by a unimodular isomorphism to exactly one of the metrised algebras given by
\[ K^4_{55} = K^1_{25} = K^1_{52} = a,\ K^3_{44} = K^2_{34} = K^2_{43} = b,\ K^3_{45} = K^3_{54} = K^2_{35} = K^2_{53} = K^1_{34} = K^1_{43} = c,
\]
\[ K^3_{55} = K^1_{35} = K^1_{53} = d,\ K^2_{44} = e,\ K^1_{44} = K^2_{45} = K^2_{54} = f,\ K^2_{55} = K^1_{45} = K^1_{54} = g,\ K^1_{55} = h,
\]
\begin{equation} \label{algebras5}
\gamma = \begin{pmatrix} 0 & 0 & 0 & 0 & 1 \\ 0 & 0 & 0 & 1 & 0 \\ 0 & 0 & 1 & 0 & 0 \\ 0 & 1 & 0 & 0 & 0 \\ 1 & 0 & 0 & 0 & 0 \end{pmatrix},\quad \begin{pmatrix} a \\ b \\ c \\ d \\ e \\ f \\ g \\ h \end{pmatrix} = \begin{pmatrix} 0 \\ \epsilon \\ 0 \\ 0 \\ 0 \\ 0 \\ 1 \\ 0 \end{pmatrix},\begin{pmatrix} 0 \\ \epsilon \\ 0 \\ \epsilon \\ 0 \\ 0 \\ 0 \\ 0 \end{pmatrix},\begin{pmatrix} 0 \\ \epsilon \\ 0 \\ \epsilon \\ \alpha \\ 0 \\ 0 \\ 0 \end{pmatrix},\begin{pmatrix} 0 \\ 0 \\ 1 \\ 0 \\ 0 \\ 0 \\ 0 \\ 0 \end{pmatrix},\begin{pmatrix} 0 \\ 0 \\ 1 \\ 0 \\ 1 \\ 0 \\ 0 \\ 0 \end{pmatrix},\begin{pmatrix} 0 \\ 0 \\ 1 \\ 0 \\ \alpha \\ 0 \\ 0 \\ \epsilon\alpha \end{pmatrix},\ \mbox{or}\ \begin{pmatrix} 1 \\ 0 \\ 1 \\ 0 \\ 1 \\ 0 \\ 0 \\ 0 \end{pmatrix},
\end{equation}
all other elements of the structure tensor $K$ being zero. Here $\alpha > 0$ is a parameter, and $\epsilon = \pm1$. }

\begin{proof}
From Lemma \ref{sign_dim2} it follows that any semi-canonical form of $(J,\gamma)$ has a partition $S$ with two blocks of size 2. By Theorem \ref{th_can} we may assume that $\gamma$ has the required form. Property i) in Definition \ref{can_J} and \eqref{3symmetry} then imply that the structure tensor $K$ has the required form, for some $a,\dots,h$. Using unimodular linear transformations of the vector space underlying $J$ we shall now bring the parameters $a,\dots,h$ to a canonical form. By direct verification it can be established that $J$ is a Jordan algebra if and only if $a = 0$ or $b = ae-c^2 = 0$. We consider these cases separately.

1. $a = 0$. For $A \in GL(2,\mathbb R)$ the transformation $\diag(A,1,(PAP)^{-T})$ of $J$, where $P$ is the nontrivial $2 \times 2$ permutation matrix, acts on the parameters $b,c,d$ in a way isomorphic to the action of $A$ on the coefficients of the quadratic form $bx_1^2+2cx_1x_2+dx_2^2$. This action has 6 orbits, corresponding to the different possible signatures of the form. If $b = c = d = 0$, then $\spa\{e_1,e_2,e_4,e_5\}$ and $\spa\{e_3\}$ are $\gamma$-orthogonal ideals, and $J$ is decomposable. The other orbits have representatives for which $(b,c,d)$ equals $(\epsilon,0,0),(\epsilon,0,\epsilon),(0,1,0)$, respectively.

1.1. $(b,c,d) = (\epsilon,0,0)$. Changing the sign of the second and fourth coordinate induces the transformation $(e,f,g,h) \mapsto (-e,f,-g,h)$. Changing the sign of the first and last coordinate induces the transformation $(e,f,g,h) \mapsto (e,-f,g,-h)$. Consider the Lie algebra of elements of the form $\begin{pmatrix} a_{11} & 0 & a_{13} &    a_{14} &    0 \\ a_{21} & 0 & a_{23} &    0 &    -a_{14} \\   0 & 0 &   0 & -a_{23} & -a_{13} \\   0 & 0 &   0 &  0 &    0 \\   0 & 0 &   0 & -a_{21} & -a_{11} \end{pmatrix}$. The action of the group generated by this Lie algebra and the sign changes on the space of parameters $(e,f,g,h)$ has three orbits, given by $\{(e,f,g,h) \,|\, h \not= 0\}$, $\{(e,f,g,h) \,|\, g \not= 0,h = 0\}$, and $\{(e,f,g,h) \,|\, g = h = 0\}$, respectively. The first and the last orbit have representatives given by $(e,f,g,h) = (0,0,0,1),(0,0,0,0)$, respectively. For the corresponding algebras the subspaces $\spa\{e_1,e_5\}$, $\spa\{e_2,e_3,e_4\}$ are mutually $\gamma$-orthogonal ideals, and $J$ is decomposable. The remaining orbit has the representative $(e,f,g,h) = (0,0,1,0)$, giving the first column in \eqref{algebras5}.

1.2. $(b,c,d) = (\epsilon,0,\epsilon)$. Consider the Lie algebra of elements of the form $\begin{pmatrix} 0 & a_{12} & a_{13} & a_{14} & 0 \\ -a_{12} & 0 & a_{23} & 0 & -a_{14} \\ 0 & 0 & 0 & -a_{23} & -a_{13} \\ 0 & 0 & 0 & 0 & -a_{12} \\ 0 & 0 & 0 & a_{12} & 0 \end{pmatrix}$. The action of the group generated by this Lie algebra on the space of parameters $(e,f,g,h)$ has the invariant $(e-3g)^2+(h-3f)^2$, and every level set of this invariant is an orbit. These orbits have representatives $(e,f,g,h) = (\alpha,0,0,0)$, where $\alpha \geq 0$ is a parameter. This yields the second and third column in \eqref{algebras5}.

1.3. $(b,c,d) = (0,1,0)$. Changing the sign of all coordinates except the third induces the transformation $(e,f,g,h) \mapsto (-e,-f,-g,-h)$. The transformation given by the matrix $\diag(P,1,P)$, where $P$ is the nontrivial $2 \times 2$ permutation matrix, acts like $(e,f,g,h) \mapsto (h,g,f,e)$. Consider the Lie algebra of elements of the form $\begin{pmatrix} a_{11} & 0 & a_{13} & a_{14} & 0 \\ 0 & -a_{11} & a_{23} & 0 & -a_{14} \\ 0 & 0 & 0 & -a_{23} & -a_{13} \\ 0 & 0 & 0 & a_{11} & 0 \\ 0 & 0 & 0 & 0 & -a_{11} \end{pmatrix}$. The action of the group generated by this Lie algebra on the space of parameters $(e,f,g,h)$ has the invariant $e\cdot h$. The two-dimensional subspace given by $e = h = 0$ is an orbit of this  action, and all other orbits have dimension 3. Taking into account the discrete transformations considered above, we may restrict our consideration to orbits of elements satisfying $e \geq |h|$. These orbits have representatives $(e,f,g,h) = (0,0,0,0),(1,0,0,0),(\alpha,0,0,\epsilon\alpha)$, where $\alpha > 0$ is a parameter, and $\epsilon = \pm1$. This yields columns 4, 5, 6 in \eqref{algebras5}.

2. $a \not= 0$, $b = 0$, $ae = c^2$. The transformation given by the matrix $\diag(1,\alpha^{-1},1,\alpha,1)$, $\alpha \not= 0$, induces the transformation $a \mapsto \alpha a$. We can hence assume $a = 1$. The transformation given by the matrix $\diag(\alpha,1,1,1,\alpha^{-1})$, $\alpha \not= 0$, induces the transformation $c \mapsto \alpha c$. We can hence assume $c = 0$ or $c = 1$, which we consider separately.

2.1. $c = e = 0$. The one-parametric group of transformations $\diag\left(1,\begin{pmatrix} 1 & \tau & -\frac{\tau^2}{2} \\ 0 & 1 & -\tau \\ 0 & 0 & 1 \end{pmatrix},1\right)$ induces the transformation $d \mapsto d + \tau$, and we can assume $d = 0$. But then $J$ is decomposable, because $\spa\{e_1,e_2,e_4,e_5\}$ and $\spa\{e_3\}$ are $\gamma$-orthogonal ideals.

2.2. $c = e = 1$. Consider the Lie algebra of elements of the form $\begin{pmatrix} 0 & a_{12} & a_{13} &  a_{14} &    0 \\  0 &   0 & a_{23} &    0 & -a_{14} \\ 0 &   0 &   0 & -a_{23} & -a_{13} \\ 0 &   0 &   0 &    0 & -a_{12} \\ 0 &   0 &   0 &    0 &    0 \end{pmatrix}$. The group generated by this algebra acts transitively on the space of parameters $(d,f,g,h)$. Hence we can set $(d,f,g,h) = (0,0,0,0)$, yielding the last column in \eqref{algebras5}.

We have shown that $(J,\gamma)$ is isomorphic to at least one of the metrised algebras in \eqref{algebras5}. In order to show that these are mutually non-isomorphic, we list for each whether it is associative, the dimension of its automorphism group, and for some of them the set of vectors $x = (x_1,x_2,x_3,x_4,x_5)^T$ where the operator $L_x$ or $U_x$ drops rank with respect to its generic rank:

\medskip

\begin{tabular}{c|ccccccccc}
algebra & 1 & 2 & 3 & 4 & 5 & 6 & 7 \\
\hline
associative & X & & & & & & X \\
$\dim\Aut(J,\gamma)$ & 2 & 2 & 1 & 2 & 1 & 1 & 0 \\
$L_x$ drops rank & & & & & $x_4(x_4^2\!-\!2x_3x_5)\!=\! 0$ & $\frac{\alpha}{2}(x_4^3\!+\!\epsilon x_5^3)\!=\!x_3x_4x_5$ & \\
$U_x$ drops rank & & $x_4\!=\!x_5\!=\!0$ & $x_4\!=\!x_5\!=\!0$ & $x_4x_5\!=\!0$ & $x_4x_5\!=\!0$ & $x_4x_5\!=\!0$ &
\end{tabular}

\medskip

\noindent The table shows that algebras in different columns are non-isomorphic. We still need to show that the metrised algebras in columns 3, 6, respectively, are non-isomorphic for different values of the parameter $\alpha$. To this end we consider the invariant subspaces $J^2 = J \bullet J$, $J^3 = J^2 \bullet J$. In both cases $\dim J^2 = 3$, $\dim J^3 = 2$, and $J^3$ is the kernel of the restriction of the form $\gamma$ to $J^2$. Hence $\gamma$ defines a nonzero quadratic form on the quotient space $J^2/J^3$, equipping it with an invariant Euclidean norm. The expression $\frac13\gamma(x \bullet x,x)$ defines an invariant cubic polynomial $p$ on $J$. Since $J^3 \bullet J = 0$, this polynomial is also well-defined on the 3-dimensional quotient space $J/J^3$. The unit sphere in $J^2/J^3$ defines an invariant nonzero vector $v \in J/J^3$ up to a sign. The polynomial $p$ on $J/J^3$ and the vector $v$ are explicitly given by $p(x_3,x_4,x_5) = x_3(x_4^2+x_5^2)+\frac{\alpha}{3}x_4^3,2x_3x_4x_5+\frac{\alpha}{3}(x_4^3+\epsilon x_5^3)$ for the algebras in columns 3, 6, respectively, and $v = (\pm 1,0,0)$ in both cases. It is straightforward to check that the pairs $(p,v)$ are non-isomorphic for different values of $\alpha$ in both cases. Finally, for different values of $\epsilon$ the corresponding metrised Jordan algebras are isomorphic, but by a linear isomorphism with determinant $-1$. Under unimodular isomorphisms different values of $\epsilon$ lead to non-isomorphic graph immersions.

Finally, let us show that all algebras are indecomposable. If some algebra is decomposable, then its factors have dimension at most 4 and must be associative. But then their product is also associative, which proves irreducibility of the algebras in columns 2---6. Let us consider the remaining two algebras. The zero algebra cannot be present as a factor for the same reason as in the proof of Theorem \ref{th_class234}. Hence the algebras could only split in two indecomposable factors of dimensions 2 and 3. These factors must be isomorphic to the metrised algebras defined by the Cayley hypersurfaces in the corresponding dimension. But the product of these two metrised algebras has an automorphism group of dimension 1, which shows that the algebras in columns 1 and 7 are also indecomposable. This completes the proof.
\end{proof}


{\corollary \label{class5} Let $M$ be a 5-dimensional irreducible improper affine hypersphere with parallel cubic form. Then either $M$ is isomorphic by a unimodular transformation to the graph of exactly one of the following functions:
\begin{eqnarray*}
F &=& x_1x_5 + x_2x_4 + \frac12x_3^2 - \epsilon x_3x_4^2 - x_4x_5^2 + \frac14x_4^4, \\
F &=& x_1x_5 + x_2x_4 + \frac12x_3^2 - \epsilon x_3x_4^2 - \epsilon x_3x_5^2 + \frac14x_4^4 + \frac12x_4^2x_5^2 + \frac14x_5^4, \\
F &=& x_1x_5 + x_2x_4 + \frac12x_3^2 - \epsilon x_3x_4^2 - \epsilon x_3x_5^2 - \frac{\alpha}{3}x_4^3 + \frac14x_4^4 + \frac12x_4^2x_5^2 + \frac14x_5^4, \\
F &=& x_1x_5 + x_2x_4 + \frac12x_3^2 - 2x_3x_4x_5 + x_4^2x_5^2, \\
F &=& x_1x_5 + x_2x_4 + \frac12x_3^2 - 2x_3x_4x_5 - \frac13x_4^3 + x_4^2x_5^2, \\
F &=& x_1x_5 + x_2x_4 + \frac12x_3^2 - 2x_3x_4x_5 - \frac{\alpha}{3}(x_4^3 + \epsilon x_5^3) + x_4^2x_5^2, \\
F &=& x_1x_5 + x_2x_4 + \frac12x_3^2 - 2x_3x_4x_5 - x_2x_5^2 - \frac13x_4^3 + x_3x_5^3 + \frac32x_4^2x_5^2 - x_4x_5^4 + \frac16x_5^6,
\end{eqnarray*}
where $\alpha > 0$ is a real parameter and $\epsilon = \pm1$, or $M$ is isomorphic to the graph of $-F$, where $F$ is exactly one of the functions listed above. The affine metric is flat in the first and in the last of the seven listed cases, in all other cases it has non-vanishing curvature. }

\begin{proof}
An irreducible improper affine hypersphere with parallel cubic form in dimension $n \geq 4$ cannot have a definite or Lorentzian affine metric by Corollaries \ref{convex_IAHS} and \ref{Lorentz_IAHS}. We may hence assume that the affine fundamental form of $M$ has either signature $(+++--)$ or signature $(++---)$. Let $(J,\gamma)$ be the indecomposable nilpotent metrised Jordan algebra defined by $M$ as in Theorem \ref{main1}. If the signature of $\gamma$ is $(+++--)$, then by Theorem \ref{algebra5} $(J,\gamma)$ is isomorphic to exactly one of the metrised algebras in \eqref{algebras5}. Evaluating the formula for the defining function $F$ in Theorem \ref{th_AHS2} for these algebras leads to the list in the corollary. If the signature of $\gamma$ is $(++---)$, then by negating the transversal vector field $\xi$ we may reduce the classification to the previous case. However, the sign change in $\xi$ leads to a sign change in the defining function $F$.

The flatness of the affine metric is equivalent to associativity of the corresponding Jordan algebra and holds in the first and the last of the listed cases. This completes the proof.
\end{proof}

Thus 5 is the smallest dimension in which improper affine hyperspheres with parallel cubic form and non-vanishing curvature of the affine pseudo-metric exist. It is also the smallest dimension where continuous families of mutually non-isomorphic such hypersurfaces occur.

\section{Other classes of graph immersions} \label{sec_other}

Up to now we have considered non-degenerate graph immersions with parallel cubic form in the sense that $\hat\nabla C = 0$, where $\hat\nabla$ is the Levi-Civita connection of the affine pseudo-metric. In this section we apply the developed algebraic methods to graph immersions or improper affine hyperspheres whose cubic form or difference tensor is parallel with respect to the affine connection $\nabla$. We determine which classes of algebras arise from these classes of hypersurface immersions. We have the following analog of Theorem \ref{main1}.

{\theorem \label{main1a} Let $f: \Omega \to \mathbb A^{n+1}$ be a non-degenerate graph immersion. Let $y \in \Omega$ be a point and let $\bullet: T_y\Omega \times T_y\Omega \to T_y\Omega$ be the multiplication $(u,v) \mapsto K(u,v)$ defined by the difference tensor $K = \nabla - \hat\nabla$ at $y$. Let $\gamma$ be the symmetric non-degenerate bilinear form defined on $T_y\Omega$ by the affine fundamental form $h$.

Then the tangent space $T_y\Omega$, equipped with the multiplication $\bullet$, is a real commutative algebra $A$, and $(A,\gamma)$ is a metrised algebra, i.e., $\gamma$ is a trace form on $A$. }

\begin{proof}
The proof is the same as that of Theorem \ref{main1}, with the parts concerning the Jordan identity dropped.
\end{proof}

We now consider several classes of hypersurface immersions.

\subsection{Graph immersions with $\nabla K = 0$}

We shall prove the following result.

{\theorem \label{DKint} Let $f: \Omega \to \mathbb A^{n+1}$ be a non-degenerate graph immersion satisfying $\nabla K = 0$, and let $y \in \Omega$. Let $(A,\gamma)$ be the metrised commutative algebra defined in Theorem \ref{main1a}. Then $A$ is associative and $f$ has representing function $F$ locally around $y$ given by $F(x) = \sum_{k=2}^{\infty} \frac{(-2)^{k-2}}{k!}\gamma(x,x^{k-1})$.

On the other hand, let $(A,\gamma)$ be a metrised associative, commutative algebra. Then the graph of the function $F(x) = \sum_{k=2}^{\infty} \frac{(-2)^{k-2}}{k!}\gamma(x,x^{k-1})$ satisfies $\nabla K = 0$, and at $x = {\bf 0}$ the difference tensor $K$ consists of the structure coefficients of $A$.

Every graph immersion satisfying $\nabla K = 0$ can be extended to a hypersurface embedding with $\nabla K = 0$, given globally by the graph of a function $F: \mathbb R^n \to \mathbb R$.

The graph immersion is an improper affine hypersphere if and only if $\det\gamma = \pm1$ and $A$ is nilpotent. }

\begin{proof}
The condition $\nabla K = 0$ implies that the difference tensor $K$ is constant in any affine chart. On the other hand, $K$ is given by \eqref{correspondence}, where $F$ is a representing function of the graph immersion. The condition $\nabla K = 0$ amounts to $F_{,\alpha\beta\gamma\delta} = F_{,\alpha\beta\rho}F^{,\rho\sigma}F_{,\gamma\delta\sigma}$. By the symmetry of the left-hand side with respect to the four indices we obtain that $K^{\alpha}_{\beta\rho}K^{\rho}_{\gamma\delta} = K^{\alpha}_{\gamma\rho}K^{\rho}_{\beta\delta}$. This condition is equivalent to the associativity of the algebra $A$.

By \eqref{correspondence} we have $\frac{\partial}{\partial x^{\alpha}}F_{,\beta\gamma} = F_{,\alpha\beta\gamma} = -2F_{,\gamma\delta}K^{\delta}_{\alpha\beta}$. This is a linear differential equation with constant coefficients on the derivative $F''$, with initial condition $F''({\bf 0}) = \gamma$. It can be integrated to $F''(x) = \gamma\exp(-2L_x)$, where the right-hand side is to be understood as a matrix product, and $L_x$ is the operator of multiplication with $x \in A$. Further integration leads to $F(x) = F({\bf 0}) + \langle F'({\bf 0}),x \rangle + x^T\gamma\left( \sum_{k=0}^{\infty} \frac{(-2)^k}{(k+2)!}L_x^k \right)x$, which with $F({\bf 0}) = F'({\bf 0}) = 0$ yields the expression in the formulation of the theorem.

Conversely, let $(A,\gamma)$ be a metrised associative, commutative algebra, and let $F$ be as in the theorem. For $u,v \in A$ we get $\nabla_u\nabla_vF(x) = \sum_{k=2}^{\infty} \frac{(-2)^{k-2}}{(k-2)!}\gamma(u,x^{k-2} \bullet v) = \sum_{k=0}^{\infty} \frac{(-2)^k}{k!}\gamma(u,L_x^kv)$. Here we used in the first equality that $A$ is associative and $\gamma$ is a trace form. It follows that $F''(x) = \gamma\exp(-2L_x)$, which yields $F''({\bf 0}) = \gamma$ and, by the relation $[L_x,L_u] = 0$, $\nabla_uF''(x) = -2\gamma\exp(-2L_x)L_u$. By \eqref{correspondence} we finally obtain $K_u = -\frac12(F'')^{-1}(\nabla_uF'') = L_u$, and hence the difference tensor $K$ is constant and contains the structure coefficients of $A$.

That every graph immersion with $\nabla K = 0$ can be extended to the graph of a function defined globally on $\mathbb R^n$ follows from the fact that $F$ is given by an exponential which is defined for all $x \in A$.

Finally, by Lemma \ref{IAHS} the graph of the function $F$ is an improper affine hypersphere if and only if $\det F'' \equiv \pm1$, which is equivalent to $\det\gamma = \pm1$ and $\det\exp(-2L_x) \equiv 1$. The second condition is equivalent to $\tr L_x = 0$ for all $x \in A$. Let us show that this condition is equivalent to the nilpotency of $A$ (cf.\ also \cite[Lemma 3.3]{DillenVrancken98}). We have $L_{x^k} = L_x^k$ for $k \geq 1$, because $A$ is associative. Hence $\tr L_x = 0$ implies $\tr L_x^k = 0$ for all $k \geq 1$, which in turn implies that $L_x$ is nilpotent. By Lemma \ref{nilnil} the algebra $A$ must then itself be nilpotent. On the other hand, if $A$ is nilpotent, then $\tr L_x = 0$ for all $x \in A$. This completes the proof.
\end{proof}

By Remark \ref{asso_flat} and Theorem \ref{DKint} a graph immersion satisfying $\nabla K = 0$ has necessarily a flat affine metric.

The algebra $A$ does not depend on the chosen point $y \in \Omega$, because the structure tensor $K$ of $A$ is constant. However, the form $\gamma$ depends on $y$, and for different points $y$ the metrised algebras $(A,\gamma)$ may be non-isomorphic, as the following example shows.

Consider the function $F(x_1,x_2) = \frac{\sqrt{2}}{8}e^{-2(x_1+x_2)}\left( \sqrt{2}\cosh(2\sqrt{2}x_2) + \sinh(2\sqrt{2}x_2) \right)$. The difference tensor of the corresponding graph immersion is constant and given by $K^1_{12} = K^1_{21} = K^2_{11} = 0$, $K^1_{11} = K^1_{22} = K^2_{12} = K^2_{21} = 1$, $K^2_{22} = 2$. The algebra $A$ is hence defined by the multiplication $(x_1,x_2) \bullet (y_1,y_2) = (x_1y_1+x_2y_2,x_1y_2+x_2y_1+2x_2y_2)$. This algebra is unital with unit element $e = (1,0)$. The Hessian $F''(y)$, applied to the vector $e$, yields $\frac{\sqrt{2}}{2}e^{-2(y_1+y_2)}\left( \sqrt{2}\cosh(2\sqrt{2}y_2) + \sinh(2\sqrt{2}y_2) \right)$. The value of the form $\gamma$ on the unit element of $A$ is an invariant of the metrised algebra $(A,\gamma)$, however, and cannot depend on $y$ if the isomorphism class of $(A,\gamma)$ does not. Hence this graph immersion defines non-isomorphic metrised algebras at different points.

Theorem \ref{DKint} suggests a close relation between non-degenerate graph immersions satisfying $\nabla K = 0$ on the one hand, and non-degenerate graph immersions with flat affine metric and satisfying $\hat\nabla C = 0$ on the other hand. Namely, both classes of hypersurface immersions generate the same class of metrised algebras. Given a non-degenerate graph immersion $M \subset \mathbb A^{n+1}$ with flat affine metric satisfying $\hat\nabla C = 0$ and a point $y \in M$, there exists exactly one graph immersion $M' \subset \mathbb A^{n+1}$ satisfying $\nabla K = 0$ which makes a third-order contact with $M$ at $y$. The immersions $M'$ will be isomorphic for different points $y$. Conversely, given a non-degenerate graph immersion $M' \subset \mathbb A^{n+1}$ satisfying $\nabla K = 0$ and a point $y \in M$, there exists exactly one graph immersion $M \subset \mathbb A^{n+1}$ with flat affine metric and satisfying $\hat\nabla C = 0$ which makes a third-order contact with $M'$ at $y$. The immersions $M$ may be non-isomorphic for different points $y$, however. Note that $M$ is irreducible if and only if $M'$ is irreducible, because both conditions are equivalent to the indecomposability of the metrised algebra $(A,\gamma)$. A similar relation holds for the subclasses of improper affine hyperspheres in these two classes of graph immersions.

The graph immersions with parallel difference tensor which correspond to the Cayley hypersurfaces are given by \cite[eq.~(6.3)]{DillenVrancken98}
\begin{equation} \label{Cayley_siblings}
x_{n+1} = \sum_{d=2}^{n+1} \frac{(-2)^{d-2}}{d!} \sum_{i_1+\dots+i_d = n+1} \prod_{j = 1}^d x_{i_j}.
\end{equation}
These are actually the only improper affine hyperspheres which satisfy $\nabla K = 0$, $K^{n-1} \not= 0$ \cite[Theorem 6.2]{DillenVrancken98}. The similarity between the hypersurfaces \eqref{Cayley_siblings} and the Cayley hypersurfaces was also noted in \cite[eq.~(2)]{HLZ11}. It can be shown that the metrised algebras $(A,\gamma)$ defined by a hypersurface of the form \eqref{Cayley_siblings} at different base points $y$ are isomorphic, and hence \eqref{Cayley_siblings} corresponds to the Cayley hypersurfaces only. In \cite[Theorem 7.1]{DillenVrancken98} it was shown that if an improper affine hypersphere satisfies $\nabla K = 0$, $K^{n-2} \not= 0$, $K^{n-1} = 0$, then it must be the direct sum of a hypersurface \eqref{Cayley_siblings} and a 1-dimensional parabola. It follows that a flat improper affine hypersphere satisfying $\hat\nabla C = 0$, $K^{n-2} \not= 0$, $K^{n-1} = 0$ must be the direct sum of a Cayley hypersurface and a 1-dimensional parabola.

\subsection{Graph immersions with $\nabla C = 0$}

{\theorem Let $f: \Omega \to \mathbb A^{n+1}$ be a non-degenerate graph immersion satisfying $\nabla C = 0$, and let $y \in \Omega$. Let $(A,\gamma)$ be the metrised commutative algebra defined in Theorem \ref{main1a}. Then $f$ has representing function $F$ locally around $y$ given by $F(x) = \frac12\gamma(x,x) - \frac13\gamma(x,x^2)$.

On the other hand, let $(A,\gamma)$ be a metrised commutative algebra. Then the graph of the function $F(x) = \frac12\gamma(x,x) - \frac13\gamma(x,x^2)$ satisfies $\nabla C = 0$, and the difference tensor $K$ consists of the structure coefficients of $A$.

Every graph immersion satisfying $\nabla C = 0$ can be extended to a hypersurface embedding with $\nabla C = 0$, given globally by the graph of a cubic polynomial $F: \mathbb R^n \to \mathbb R$.

The graph immersion is an improper affine hypersphere if and only if $\det\gamma = \pm1$ and the operator $L_x$ is nilpotent for every $x \in A$. }

\begin{proof}
The condition $\nabla C = 0$ is equivalent to $F$ being a cubic polynomial. By \eqref{correspondence} the metrised algebra $(A,\gamma)$ is in one-to-one correspondence with the pair of tensors $(F''(y),F'''(y))$, which implies the first three assertions of the theorem.

At the point $x \in A$ we have $F_{,\alpha\beta}u^{\alpha}v^{\beta} = \gamma(u,v) - 2\gamma(u,x \bullet v)$, and the matrix of the Hessian $F''$ is given by the matrix product $\gamma(I - 2L_x)$. Therefore we have $\det F'' = \det\gamma \cdot \det(I-2L_x)$. By Lemma \ref{IAHS} $f$ then defines an improper affine hypersphere if and only if $\det\gamma = \pm1$ and $\det(I-2L_x) \equiv 1$.

Clearly if $L_x$ is nilpotent, then $\det(I-2L_x) = 1$. Assume now that $\det(I-2L_x) = 1$ for all $x \in A$. Let $\lambda_1,\dots,\lambda_n$ be the eigenvalues of $L_x$. Then we have $\det(I - 2L_{\frac{x}{2t}}) = \prod_{k=1}^n \left(1-\frac{\lambda_k}{t}\right) = 1$ for all $t \not= 0$, and hence $\prod_{k=1}^n (t-\lambda_k) = t^n$. It follows that the $\lambda_k$ are the roots of the polynomial $p(t) = t^n$, and $L_x$ is nilpotent. This completes the proof.
\end{proof}

\bibliography{affine_geometry,jordan,geometry,algebra}
\bibliographystyle{plain}

\end{document}